\theoremstyle{plain}
\newtheorem{theorem}{Theorem}[section]
\newtheorem{lemma}[theorem]{Lemma}
\newtheorem{corollary}[theorem]{Corollary}
\theoremstyle{definition}
\newtheorem{definition}[theorem]{Definition}
\newtheorem{assumption}[theorem]{Assumption}
\theoremstyle{remark}
\newtheorem{remark}[theorem]{Remark}
\def\<#1,#2>{\langle #1,#2\rangle}
\def\({\left(}
\def\){\right)}
\def\[{\left[}
\def\]{\right]}
\DeclareMathOperator{\spn}{span}
\DeclareMathOperator{\range}{range}
\DeclareMathOperator{\swap}{swap}
\DeclareMathOperator{\NoBadVertices}{NoBadVertices}
\DeclareMathOperator{\FindCandidate}{FindCandidate}
\DeclareMathOperator{\PotentialBadVertices}{PotentialBadVertices}
\DeclareMathOperator{\InnerLoop}{InnerLoop}
\DeclareMathOperator{\Rearrange}{Rearrange}
\DeclareMathOperator{\ReverseOrders}{ReverseOrders}
\DeclareMathOperator{\AtLastLevel}{AtLastLevel}
\DeclareMathOperator{\Lap}{\mathbf{L}}
\DeclareMathOperator{\col}{col}
\DeclarePairedDelimiter\ceil{\lceil}{\rceil}
\newcommand{\N}{\mathbb{N}}
\newcommand{\R}{\mathbb{R}}
\newcommand{\mA}{{\bf A}}
\newcommand{\mI}{{\bf I}}
\newcommand{\mW}{{\bf W}}
\newcommand{\cE}{{\mathcal{E}}}
\newcommand{\cG}{{\mathcal{G}}}
\newcommand{\cH}{{\mathcal{H}}}
\newcommand{\cL}{{\mathcal{L}}}
\newcommand{\cN}{{\mathcal{N}}}
\newcommand{\cV}{{\mathcal{V}}}
\newcommand{\cW}{{\mathcal{W}}}
\newcommand{\bx}{{\bf x}}
\newcommand{\by}{{\bf y}}
\newcommand{\bz}{{\bf z}}
\newcommand{\eps}{\varepsilon}
\renewcommand{\epsilon}{\varepsilon}
\newcommand{\DP}{{\mathcal{D}\mathcal{P}}}
\newcommand{\ol}{\overline}
\newcommand{\one}{\mathbf{1}}
\newcommand{\norm}[1]{\left\| #1 \right\|}
\newcommand{\angles}[1]{\left\langle #1 \right\rangle}
\newcommand{\cbraces}[1]{\left( #1 \right)}
\newcommand{\braces}[1]{\left\{ #1 \right\}}
\newcommand{\circledOne}{\text{\ding{172}}}
\newcommand{\circledTwo}{\text{\ding{173}}}
\newcommand{\numberthis}{\addtocounter{equation}{1}\tag{\theequation}}
\icmltitlerunning{Slowly Time-Varying Networks}
\begin{document}
	\twocolumn[
	\icmltitle{Is Consensus Acceleration Possible in Decentralized Optimization over Slowly Time-Varying Networks?}

	\icmlsetsymbol{equal}{*}

	\begin{icmlauthorlist}
		\icmlauthor{Dmitriy Metelev}{MIPT}
		\icmlauthor{Alexander Rogozin}{MIPT,HSE}
		\icmlauthor{Dmitry Kovalev}{UCL}
		\icmlauthor{Alexander Gasnikov}{MIPT,ISP,IITP}
	\end{icmlauthorlist}

	\icmlaffiliation{MIPT}{Moscow Institute of Physics and Technology, Moscow, Russia}
	\icmlaffiliation{UCL}{Universit\'e Catholique de Louvain, Ottignies-Louvain-la-Neuve, Belgium}
	\icmlaffiliation{HSE}{HSE University, Moscow, Russia}
	\icmlaffiliation{ISP}{ISP RAS Research Center for Trusted Artificial Intelligence, Moscow, Russia}
	\icmlaffiliation{IITP}{Institute of Information Transmission Problems, Moscow, Russia}

	\icmlcorrespondingauthor{Alexander Rogozin}{aleksandr.rogozin@phystech.edu}

	\icmlkeywords{convex optimization, decentralized optimization, time-varying network}

	\vskip 0.3in
	]

	\printAffiliationsAndNotice{}  % leave blank if no need to mention equal contribution
%	\printAffiliationsAndNotice{\icmlEqualContribution} % otherwise use the standard text.

%	{\color{red}
%	\section*{Notation}
%	\begin{itemize}
%	   	\item $\cG$: graph
%	   	\item $\cE$: set of edges
%		\item $\cV$: set of vertice
%	   	\item $n$: number of nodes in graph
%	   	\item $i$: index of node or function it holds
%	   	\item $(i, j)$: edge in the graph
%	   	\item $d$: dimension of space (initially)
%	   	\item $k$: time step, number of iteration
%	   	\item $\DP$: decentralized problem (change to $\cP$ or $\cD\cP$?)
%	\end{itemize}
%	}

	\begin{abstract}
		We consider decentralized optimization problems where one aims to minimize a sum of convex smooth objective functions distributed between nodes in the network. The links in the network can change from time to time. For the setting when the amount of changes is arbitrary, lower complexity bounds and corresponding optimal algorithms are known, and the consensus acceleration is not possible. However, in practice the magnitude of network changes may be limited. We derive lower communication complexity bounds for several regimes of velocity of networks changes. Moreover, we show how to obtain accelerated communication rates for a certain class of time-varying graphs using a specific consensus algorithm.

%		Decentralized optimization of smooth convex functions over time-varying networks can be called a well-studied area of research. Namely, lower complexity bounds and optimal algorithms have been proposed in literature \cite{kovalev2021lower}. However,
	\end{abstract}

	\section{Introduction}
	In this paper we consider a decentralized optimization problem 
	\begin{equation}\label{main_problem}
	    \min_{x
	    \in \R^m}~ f(x) = \frac{1}{n}\sum_{i=1}^n{f_i(x)},
	\end{equation}
	where each function $f_i$ is convex, has a Lipschitz gradient and is stored at a separate computational node. Nodes are connected by a communication network (that may change over time). Each node is an independent computational agent that can perform local computations based only on the information in its local memory. At each communication step, nodes can only exchange information with their neighbours.
	
	Sum-type problems of type \eqref{main_problem} have applications in practical scenarios where centralized coordination is not possible. Communication constraints may appear due to large amounts of data or due to privacy constraints \cite{konevcny2016federated} and are determined by the structure of the network. Decentralized optimization is widely used in distributed machine learning \cite{rabbat2004distributed,forero2010consensus,nedic2020distributed,gorbunov2022recent}, distributed control \cite{ram2009distributed,gan2012optimal} and distributed sensing \cite{bazerque2009distributed}.
	
%	There is a key difference in mixing time over static and time-varying networks. Let $\chi$ denote the condition number of the network. The mixing time over static networks is proportional to $\sqrt\chi$ and over time-varying networks it is proportional to $\chi$. Both results are non-improvable. However, the mixing time over time-varying networks is unimprovable under the assumption that the graph changes arbitrarily, the only requirement is that it stays connected (or either the union of $\tau$ consequent graphs stays connected).

	\subsection{Time-Varying Networks}
	
	We study the setting when the network is \textit{time-varying}. That means that the links between the nodes may appear and disappear from time to time. In practice, the changes in the links may occur due to loss of wireless connection between the agents or other technical malfunctions. Note that while the set of edges may change, the set of vertices stays the same.

	\subsection{Related work}
	
	In this paper, we assume the objective $f(x)$ in \eqref{main_problem} to be $L$-smooth and $\mu$-strongly convex. Complexity bounds for decentralized optimization include two quantities: objective condition number $\kappa_g = L/\mu$ and network condition number $\chi$. In case of the time-varying network, $\chi$ denotes the worst-case condition number over time steps. %Generally speaking, if the network is static, optimal communication complexity is proportional to $\chi^{1/2}$; if the network is time-varying, optimal communication complexity is proportional to $\chi$.

	Lower complexity bounds for optimization over static graphs were proposed in \cite{scaman2017optimal}. The lower communication complexity bound is $\Omega(\kappa_g^{1/2} \chi^{1/2} \log(1/\eps))$. %This result is natural from the point of view that $\sqrt\chi\sim \text{diam}(\cG)$, where $\text{diam}(\cdot)$ denotes the diameter of a graph.
	The corresponding optimal algorithms are MSDA \cite{scaman2017optimal} (using dual oracle) and OPAPC \cite{kovalev2020optimal} (using primal oracle). %Moreover, the communication is accelerated via using Chebyshev polynomials.
	
	For time-varying networks, the lower communication complexity bound is $\Omega(\kappa_g^{1/2} \chi \log(1/\eps))$ \cite{kovalev2021lower}. The corresponding optimal algorithms with primal oracle are ADOM+ \cite{kovalev2021lower} and Acc-GT \cite{li2021accelerated} with multi-step communication. An optimal dual algorithm is ADOM \cite{kovalev2021adom}. Prior to optimal algorithms, several non-accelerated schemes like DIGing \cite{nedic2017achieving} and sub-optimal methods with additional logarithmic factor, i.e. APM-C \cite{li2020decentralized,dvinskikh2019decentralized,rogozin2021towards} and Mudag \cite{ye2020multi} were proposed.
	
	Optimal algorithms both for static and time-varying scenarios use a multi-step consensus scheme. In the time-static case, the communication matrix is replaced by a Chebyshev polynomial of it \cite{scaman2017optimal}. The degree of polynomial is $\lceil\chi^{1/2}\rceil$ and its condition number is $O(1)$. In the time-varying case, after each oracle call multiplication is performed by $\chi$ matrices in a row instead of only one matrix \cite{kovalev2021lower}.

	\subsection{Contributions}

	The case when arbitrarily many edges can change at each time step is well-studied. However, we think that such a setting is not realistic and in practice the magnitude of graph changes may be limited. We investigate several types of such restrictions and derive lower complexity bounds for each case. This constitutes the first part of our work (see Table \ref{tab:lower_bounds}). We show that it is sufficient to change a polynomial number of vertices (i.e. $O(n^\alpha)$ for some $\alpha > 0$) at each iteration in order to slow consensus speed down to factor $\chi$. Moreover, if a logarithmic number of edges is changed (i.e. $O(\log n)$), the consensus is slowed down to $\chi/\log\chi$. Finally, our results suggest that a partial consensus acceleration (i.e. dependency on $\chi$ in power between $1/2$ and $1$) is possible if the number of changes is bounded by a constant.

	\begin{table}[ht]
		\caption{Known lower communication complexity bounds for decentralized optimization and our results. Here $\alpha>0$ is a scalar and $d\in\N$ is a constant. The complexity depends on the maximum number of changes in links allowed at each iteration.}
		\label{tab:lower_bounds}
		{\footnotesize
			\begin{tabular}{|b{1.38cm}|b{2.95cm}|>{\centering\arraybackslash}b{2.9cm}|}
				\hline
				Number of & \parbox[t]{20mm}{\multirow{2}{*}{Lower bound}} & \parbox[t]{20mm}{\multirow{2}{*}{Reference}} \\
				changes & & \\
				\hline
				no changes & \hspace{-0.0cm}$\Omega\cbraces{\chi^{1/2}\kappa_g^{1/2}\log\frac{1}{\eps}}$ & \cite{scaman2017optimal} \\ \hline
				$O(n)$ & \hspace{-0.0cm}$\Omega\cbraces{\chi\kappa_g^{1/2}\log\frac{1}{\eps}}$ & \cite{kovalev2021lower} \\ \hline
				$O(n^\alpha)$ & \hspace{-0.0cm}$\Omega\cbraces{\chi\kappa_g^{1/2}\log\frac{1}{\eps}}$ & This paper, Th.~\ref{theorem:poly} \\ \hline
				$O(\log n)$ & \hspace{-0.0cm}$\Omega\cbraces{\frac{\chi}{\log\chi}\kappa_g^{1/2}\log\frac{1}{\eps}}$ & This paper, Th.~\ref{theorem:log} \\ \hline
				$12(d - 1)$ & \hspace{-0.0cm}$\Omega\cbraces{\chi^{d/(d+1)}\kappa_g^{1/2}\log\frac{1}{\eps}}$ & This paper, Th.~\ref{theorem:const} \\ \hline
			\end{tabular}
		}
	\end{table}

	In the setting where a constant number of edges changes at each iteration, our results allow to establish the known lower bounds for static graphs and time-varying graphs with arbitrary changes. The corresponding results are presented in the last line of Table~\ref{tab:lower_bounds}. Putting $d = 1$ leads to the static case and the lower bound coincides with the one in \cite{scaman2017optimal}. In the opposite case, taking $d\to\infty$ leads to the scenario with arbitrary changes, and the corresponding lower bound approaches the results in \cite{kovalev2021lower}. In other words, our results suggest an interpolation between two edge cases: static graphs and time-varying graphs with arbitrary changes.

	In the second part of our paper, we address a multi-step consensus technique for time-varying graphs. More precisely, we apply Nesterov acceleration technique to time-varying consensus. The acceleration is attained under an additional assumption: we assume that all graphs have a common connected subgraph that we call a \textit{skeleton}. On the one hand, this assumption is more strict then requiring the network to stay connected all the time. On the other hand, we think that such an assumption may be realistic in practical scenarios.

	The consensus procedure for graphs with connected skeleton is slightly modified: the two nodes stop communicating to each other if the connection between them has been lost at least once. In other words, the active links in the communication graph are not restored after they have failed at least once, and therefore the network is "monotonically decreasing".

	Summing up, this paper makes a step in the direction of optimization over special classes of time-varying networks. Our lower bounds show that acceleration communication protocol is hard to be designed even over slowly time-varying graphs. On the other hand, we show a specific class of networks over which accelerated consensus is reachable.
	
	The paper is organized as follows. In Section~\ref{sec:definitions_and_assumptions} we introduce notation, definitions and assumptions. In Section~\ref{sec:lower_bounds}, we present our main results on lower bounds. After that, in Section~\ref{sec:accelerated_gossip_tw} we describe the accelerated gossip protocol over time-varying networks with connected skeleton.

    \section{Definitions and Assumptions}\label{sec:definitions_and_assumptions}
    
	We denote Kronecker product by $\otimes$. The nullspace of matrix $\mA$ is denoted $\ker\mA$ and the range of $\mA$ is denoted $\range\mA$. Moreover, if $\mA$ is symmetric and positive semi-definite, we denote its largest eigenvalue $\lambda_{\max}(\mA)$, its minimal nonzero eigenvalue $\lambda_{\min}^+(\mA)$ and its condition number $\chi(\mA) = \lambda_{\max}(\mA) / \lambda_{\min}^+(\mA)$.  For vectors $x_1, \ldots, x_n\in\R^d$, we introduce a column stacked vector $\bx = \col[x_1, \ldots, x_n] = (x_1^\top \ldots x_n^\top)^\top\in\R^{nd}$. We also denote $\N = \{1, 2, \ldots\}$ to be the set of positive integers.
    
    \subsection{Objective Functions}
    
	Let $H$ be an arbitrary Hilbert space, let $\norm{\cdot}$ be the norm on $H$ and let $\norm{\cdot}_*$ denote the conjugate norm. %Throughout the paper we assume that $H=\ell_2$.
	\begin{definition}
	    Function $h(x):~ H\to\R$ is called $L$-smooth if for any $x, y\in H$ it holds
	    \begin{align*}
	        \norm{\nabla h(y) - \nabla h(x)}_*\leq L\norm{y - x}.
	    \end{align*}
	\end{definition}
	\begin{definition}
	    Function $h(x):~ H\to\R$ is called $\mu$-strongly convex if for any $x, y\in H$ it holds
	    \begin{align*}
	        h(y)\geq h(x) + \angles{\nabla h(x), y - x} + \frac{\mu}{2}\norm{y - x}^2.
	    \end{align*}
	\end{definition}
	Throughout the paper we only work with $H = l_2$. Although the lower bounds are meat for optimization in a finite dimension, they are derived for $l_2$, as it is typically done in optimization of strongly convex smooth functions \cite{nesterov2004introduction}.

	\subsection{Decentralized Communication}
	We assume that distributed communication is performed via a series of communication rounds. In each of the rounds, the nodes interact through a network represented by an undirected communication graph $\cG_k = (\cV, \cE_k)$, where $k\in\N$ is the current iteration number. The nodes can only communicate to their immediate neighbors in the corresponding network.
	
	Note that the set of nodes $\cV$ does not change over time. Throughout the paper we only consider the case when all graphs $\cG_k$ are connected.
	
	In the literature, analysis of optimization algorithms in the decentralized setting as well as the lower bounds are usually based on the condition number of gossip matrices.
%    \metelev{i changed "spectral gap" on "condition number" because probably i have mistaken previously}
	\begin{assumption}\label{assum:gossip_matrix}
	    Matrix $W_k\in \R^{n\times n}$ is called a gossip matrix of undirected graph $\cG_k=\(\cV, \cE_k\)$ if the following properties are satisfied:
	    \begin{enumerate}
	        \item $W_k$ is symmetric positive semi-definite,
	        \item $[W_k]_{i, j} = 0$ if $i\neq j$ and $(i, j)\not \in \cE_k$,
	        \item $\ker W_k = \{(x_1, \dots, x_n)\in\R^n:x_1=\dots=x_n\}$.
	    \end{enumerate}
	\end{assumption}

	Given a gossip matrix $W_k$, we define $\mW_k = W_k\otimes \mI_d$. Then $\mW_k$ is also symmetric and positive semi-definite, multiplication by $\mW_k$ represents one communication round and $\ker\mW_k = \{\bx = \col[x_1, \ldots, x_n]\in \R^{nd}:~ x_1 + \ldots+ x_n = 0\}$.
	
	For a given gossip matrix $W$, introduce its condition number $\chi(W) = \lambda_{\max}(W) / \lambda_{\min}^+(W)$.
	
	A common example of a communication matrix is the graph Laplacian $\Lap(\cG_k) = D(\cG_k) - A(\cG_k)$, where $A(\cG_k)$ denotes the adjacency matrix of $\cG_k$ and $D(\cG_k) = \text{diag}(\sum_i A_{ij})$ is a diagonal matrix with degrees of the nodes at diagonal. Laplacian matrix $\Lap(\cG_k)$ satisfies Assumption~\ref{assum:gossip_matrix}.
	
	Further in the paper we will use only Laplacian matrices, therefore by slight abuse of notation we denote $\chi(G) = \chi(\Lap(G))$.

    \subsection{Decentralized Problem and Slowly-Changing Setup}
%    Moreover, we bound the spectrum of gossip matrices uniformly over time.
%    \begin{assumption}\label{assum:bounded_gossip_spectrum}
%    	There exist $\lambda_{\max}\geq \lambda_{\min}^+ > 0$ such that for every $k = 0, 1, \ldots$ it holds $\lambda_{\min}^+\leq \lambda_{\min}^+(W_k)\leq \lambda_{\max}(W_k)\leq \lambda_{\max}$.
%    \end{assumption}
	The main results of our work are lower bounds for decentralized optimization problems. We formalize the definition of decentralized problem and its characteristics.
    \begin{definition}
        Let us define $\DP$ (decentralized time-varying problem) as a pair $(\{\cG_k\}_{k=1}^\infty, \{f_i\}_{i=1}^n)$. Firstly, $\DP$ includes a sequence of undirected connected graphs $\{\cG_k\}_{k=1}^{\infty}$ with a common set of vertices $\cV = \{1, 2, \ldots, n\}$ and edge sets $\{\cE_k\}_{k=1}^\infty$. Let $n(\DP) = n$, $\chi(\DP) = \sup_{k\in \N} \chi(\cG_k)$. Secondly, $\DP$ includes a set of objective functions $\{f_i\}_{i=1}^n$. We refer to $f(x) = \frac{1}{n}\sum_{i=1}^n f_i(x)$ as a global function.
%        $n=n(\DP)$ refers to the size of $\DP$, $\chi=\chi(\DP)$ to the characteristic number of $\DP$.
    \end{definition}

    \begin{definition}
        Decentralized time-varying problem $\DP$ is called $L$-smooth if global function $f$ is $L$-smooth. 
    \end{definition}

    \begin{definition}
        Decentralized time-varying problem $\DP$ is called convex ($\mu$-strongly convex)  if global function $f$ is convex ($\mu$-strongly convex).
    \end{definition}
%    
%    \begin{definition}
%        If decentralized problem $\DP$ is $L$ smooth and $\mu$-strongly convex we define a global function condition number $\kappa_g = \kappa_g(\DP) = L/\mu$. %If every vertex function $f_i(x)~ (i = 1, \ldots, n)$ is $L'$ smooth and $\mu'$-strongly convex we define a local function condition number $\kappa_l = \kappa_l(\DP) = L'/\mu'$.
%    \end{definition}
%	\arogozin{Is the definition of $L', \mu', \kappa_l$ above used?}
%        \metelev{No, they are independent.}
    
    \begin{definition}
        Let $\Delta(\DP)$ denote the maximum amount of edges that change between consequent communication rounds. Particularly, $\Delta(\DP) = \max_{k\in \N}\left\{\sum_{i<j}\mathbb{I}_{ij}(\cE_k, \cE_{k+1})\right\}$, where
        $$\mathbb{I}_{ij}(\cE,\cE')=\begin{cases}
            1, & \text{if } (i, j) \in (\cE\setminus \cE')\cup(\cE'\setminus \cE), \\
            0, & \text{otherwise}.
        \end{cases}$$
%        \arogozin{$E\bigcap E'$?}
%        \metelev{No, it is the difference is important here.}
%        \arogozin{changed definition of $\mathbb{I}$}
    \end{definition}
    
	The introduced quantity $\Delta(\DP)$ expresses the maximum change in edges between two consequent time steps. Later in the paper we show that the value of $\Delta(\DP)$ regulates the magnitude of network changes and determines the dependence of the lower bounds on condition number $\chi$.
	
	\textbf{Example 1}. Consider a static graph and denote its corresponding problem $\DP_{static}$. All of the graphs in $\DP_{static}$ are the same, therefore, we have $\Delta(\DP_{static}) = 0$.
	
	\textbf{Example 2}. Consider the example used in lower bounds in \cite{kovalev2021lower}. The authors proposed a star graph which center changes at each iteration (denote the corresponding problem $\DP_{star}$). In such a setting, at every iteration every edge in the graph changes. We have $\Delta(\DP_{star}) = 2(n - 1)$.

	\begin{figure}[ht]
		\centering
		\includegraphics[width=0.48\textwidth]{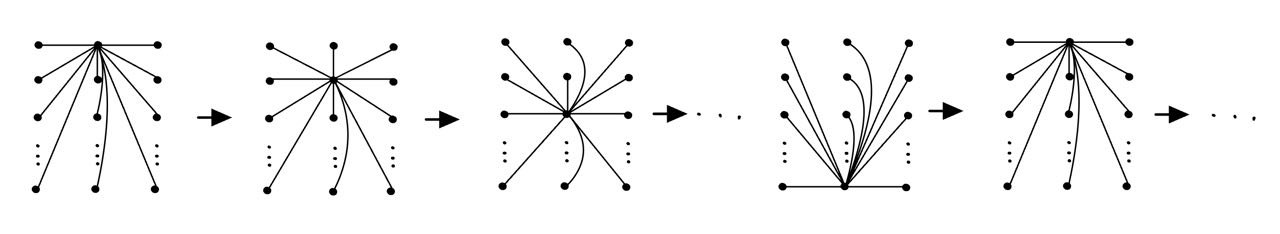}
		\caption{Example of a time-varying network with all edges changing at each iteration \cite{kovalev2021lower}}
		\label{fig:example_adom_plus}
	\end{figure}

    \section{Lower Bounds}\label{sec:lower_bounds}

    This section presents three results corresponding to different constraints on the rate of change of the communication graph: a polynomial constraint on the change of edges per iteration, a logarithmic constraint, and a constant constraint. We show that each of these regimes leads to a different complexity dependency on the condition number of the gossip matrix.
    
    \subsection{First-order Decentralized Algorithms}
    
    Let us first formalize the procedure for which we derive the lower bounds. Following the definitions of \cite{kovalev2021lower} and \cite{scaman2017optimal}, we consider time steps $k\in\N$ and introduce local memory $\cH_i(k)$ for each of the agents at time step $k$. At each time step, the agents can either communicate or perform local computations. For each time step $k$, denote the last preceding communication time as $q(k)$.
    
    1. If nodes perform a local computation at step $k$, local information is updated as
    \begin{align*}
    \cH_i(k+1)\subseteq \spn\cbraces{\braces{x, \nabla f_i(x), \nabla f_i^*(x):~ x\in\cH_i(k)}}
    \end{align*}
    for all $i = 1, \ldots, n$.
    
    2. If the nodes perform a communication round at time step $k$, local information is updated as
    \begin{align*}
    \cH_i(k+1)\subseteq \spn\cbraces{\bigcup_{j\in\cN_i^{q(k)}\cup\{i\}} \cH_j(k)}
    \end{align*}
    for all $i = 1, \ldots, n$. Here $\cN_i^{q(k)}$ is a set of neighbors of agent $i$ at time step $q(k)$, i.e. at the time of last communication.
    
    %	Here we stick to a discrete time model as in \cite{kovalev2021lower}. Note that in \cite{scaman2017optimal} a continuous time model is used
    
    \subsection{Main Results}
    
    The following theorem discusses the polynomial constraint on the change of edges per iteration; it turns out that such a constraint leads to the same lower bound as in the unconstrained mode studied in \cite{kovalev2021lower}.
	
	\begin{theorem}\label{theorem:poly}
		For any $L\geq\mu>0$, $L > 24$, $\alpha > 0,~ c > 0, M > 0$ there exists a constant $K(\alpha, c)>0$ and $L$-smooth $\mu$-strongly convex decentralized problem $\DP$ with $n(\DP) = n > M, \chi(\DP) = \chi > M$, $\Delta(\DP) \le cn^\alpha$, such that for any first-order decentralized algorithm for all $p\in\N$ we have
		\begin{equation*}
		\norm{x_p-x_*}^2 \ge \(1-2\sqrt{6}\sqrt{\frac{\mu}{L}}\)^{\frac{K(\alpha, c)p}{\chi}+2}\norm{x_0-x_*}^2.
		\end{equation*}
	\end{theorem}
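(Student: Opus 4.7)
The plan is to adapt the time-varying lower bound construction of \cite{kovalev2021lower} so that the polynomial budget $\Delta(\DP)\leq cn^\alpha$ is respected. The key observation driving the reduction is that the rotating-star construction of \cite{kovalev2021lower} already realises the worst case on a star whose number of vertices is proportional to $\chi$, so the full $n$-vertex graph only needs a \emph{small} active substructure that actually changes.

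\textbf{Parameter choice and graph construction.} Given $\alpha, c, M$ and target values $n, \chi > M$, I would first pick $n$ large enough that $m := \lfloor cn^\alpha / 2\rfloor$ satisfies $m \geq \chi$ (always possible by taking $n\geq (2\chi/c)^{1/\alpha}$; the theorem does not tie $n$ to $\chi$). On a distinguished subset $\cV_{\mathrm{act}}\subset\cV$ of $m$ vertices I copy the changing-centre star sequence of \cite{kovalev2021lower}: at each iteration one active vertex serves as the star centre and the other $m-1$ as leaves, with the centre rotated cyclically. On this substructure alone, $\Delta \leq 2(m-1) \leq cn^\alpha$ and $\chi(\cdot) = \Theta(m) \geq \chi$. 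The remaining $n-m$ passive vertices are attached through a single anchor leaf via a static shallow tree; this contributes $0$ to $\Delta(\DP)$, and a short Rayleigh quotient argument shows the overall Laplacian condition number stays $\Theta(m)$.

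\textbf{Hard functions and propagation argument.} I would place Nesterov's standard $L$-smooth, $\mu$-strongly convex zero-chain quadratic pair on two carefully chosen active vertices and set $f_i\equiv 0$ elsewhere. Following the Krylov-subspace template of \cite{scaman2017optimal,kovalev2021lower}, I would then show that after $p$ communication rounds the local memory $\cH_i(p)$ at every node can activate at most $O(p/m)$ new coordinates of the hard chain, because the rotating-star topology forces information from one "hard" vertex to transit through the rotating centre before reaching the other. Composed with Nesterov's standard lower bound for $L$-smooth $\mu$-strongly convex minimisation, this yields the claimed rate $(1-2\sqrt{6}\sqrt{\mu/L})^{K(\alpha,c)p/\chi+2}\|x_0-x_*\|^2$, where $K(\alpha,c)$ absorbs the constant in $m = \Theta(cn^\alpha)$ and the ratio $m/\chi$.

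\textbf{Main obstacle.} The delicate step is the subspace-propagation bound in the presence of the static passive tree: one must rule out that these non-changing passive edges give the algorithm a "shortcut" that propagates active coordinates faster than $O(1/m)$ per round. Intuitively this is immediate because the passive nodes hold the zero function and can only linearly combine subspaces that were already available on the active side, but making it rigorous requires an invariant tracked jointly over active and passive nodes across \emph{both} local-computation and communication steps, in the style of the main propagation lemma in \cite{kovalev2021lower}. The complementary spectral calculation bounding $\chi$ after attaching the passive tree is a routine eigenvalue-interlacing check and should not require new ideas.
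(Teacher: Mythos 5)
Your construction breaks down at precisely the step you dismiss as ``a routine eigenvalue-interlacing check,'' and the failure is quantitative, not technical. If only $m=\lfloor cn^{\alpha}/2\rfloor$ vertices are active and the remaining $n-m\gg m$ passive vertices are attached through a single anchor, then the cut separating the active set $S$ ($|S|=m$) from the passive component crosses exactly one edge, and the Fiedler test vector $\mathbf{1}_S-\tfrac{m}{n}\mathbf{1}$ gives $\lambda_{\min}^{+}(\Lap)\le \tfrac{n}{m(n-m)}\approx \tfrac{1}{m}$, while the rotating-star centre has degree $m-1$, so $\lambda_{\max}(\Lap)\ge m$. Hence $\chi(\DP)\gtrsim m^{2}$, not $\Theta(m)$, and redistributing the passive vertices only relocates the bottleneck (or, if you connect the passive part densely enough to keep $\lambda_{\min}^{+}$ of order one, it creates length-two paths between active vertices that destroy the slow information flow). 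Since the propagation delay of your active substructure is at most $O(m)=O(\sqrt{\chi(\DP)})$, the exponent you can prove is $\Theta(p/\sqrt{\chi})$ rather than $\Theta(p/\chi)$: you recover only the static lower bound $\Omega(\sqrt{\chi}\sqrt{L/\mu}\log(1/\eps))$, not the claimed $\Omega(\chi\sqrt{L/\mu}\log(1/\eps))$. The theorem requires the information-flow time to be $\Omega(\chi(\DP))$ for the graphs actually used, so any construction that inflates $\chi(\DP)$ with a large passive component without correspondingly slowing propagation cannot work.

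Separately, the propagation claim (``at most $O(p/m)$ new coordinates because information must transit through the rotating centre'') is unsubstantiated under this paper's oracle model: a communication round gives $\cH_i(k+1)\subseteq\spn(\bigcup_{j\in\cN_i^{q(k)}\cup\{i\}}\cH_j(k))$, and in a spanning star every leaf is at distance two from every other leaf, so a coordinate unlocked in $\cV_1$ reaches all of $\cV_2$ after two rounds regardless of how the centre rotates; nothing in your sketch produces a $\Theta(m)$-round delay per coordinate. The paper takes a different route that resolves both issues simultaneously: the whole graph is a Bethe tree $B_{d,k}$ with $\chi=\Theta(n)$, and propagation is slowed not by topology alone but by a relocation (``swap'') scheme in which any ``good'' vertex about to touch the informed set is exchanged with a distant good vertex. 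This caps the number of newly informed vertices at $k-1$ per round, forces all $\Theta(n)$ buffer vertices to be consumed before $\cV_2$ is reached (so $T=\Omega(\chi/(k-1))$), and changes only $O(kd)=O(kn^{1/(k-1)})\le cn^{\alpha}$ edges per round once $k=k(\alpha,c)$ is fixed. To repair your argument you would need an analogous mechanism tying the propagation delay to the condition number of the full graph sequence, not just of the active substructure.
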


	\begin{corollary}
		For any $L\geq\mu>0$, $L > 24$, $\alpha > 0, c > 0$ there exists $L$-smooth $\mu$-strongly convex decentralized problem $\DP$ with sufficiently large $\chi(\DP) = \chi,~ n(\DP) = n$, such that $\Delta(\DP) \le cn^\alpha$, and for any first-order decentralized algorithm the number of communication rounds to find an $\epsilon$-accurate solution of the problem \ref{main_problem} is lower bounded by
		\begin{equation*}
		\Omega\(\chi\sqrt{L/\mu}\log{\frac{1}{\epsilon}}\).
		\end{equation*}
	\end{corollary}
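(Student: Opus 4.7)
The plan is to adapt the lower bound construction of \cite{kovalev2021lower} to the slowly-changing regime where only $cn^\alpha$ edges are modified per iteration. At the heart of any such lower bound is the standard information-propagation framework: one distributes a Nesterov-style quadratic hard instance between two designated nodes and bounds how fast the "support" of the iterates $\bx_p = \col[x_{1,p},\ldots,x_{n,p}]$ can grow through the first-order decentralized oracle described in Section~\ref{sec:lower_bounds}. For the objective piece, I would take two nodes $s,t\in\cV$ and set
\begin{align*}
f_s(x) &= \tfrac{L-\mu}{8}\bigl(\tfrac{1}{2}x_1^2 - \sum_{j\text{ odd}}x_jx_{j+1}\bigr) + \tfrac{\mu}{2}\|x\|^2 - \tfrac{L-\mu}{4}x_1,\\
f_t(x) &= \tfrac{L-\mu}{8}\bigl(-\sum_{j\text{ even}}x_jx_{j+1}\bigr) + \tfrac{\mu}{2}\|x\|^2,
\end{align*}
with $f_i(x)=\tfrac{\mu}{2}\|x\|^2$ for the remaining nodes; the $\sqrt{\mu/L}$-geometric decay in the statement is then the standard \cite{nesterov2004introduction} bound once we control the "information horizon" $m(p):=\max_i\dim(\cH_i(p))$.

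The non-trivial step is the graph construction, which must simultaneously (i) have $\chi(\cG_k)\equiv\chi$, (ii) satisfy $\Delta(\DP)\leq cn^\alpha$, and (iii) force $m(p)\leq Kp/\chi+O(1)$. My approach is to take the Kovalev-style moving-star/caterpillar gadget, but to ``spread'' each topological rotation across a $T:=\lceil n^{1-\alpha}/c\rceil$-block of consecutive iterations. Concretely: choose the skeleton to be a chain of $n/T$ clusters of size $T$ joined by weighted bridges whose weights are tuned so that $\chi(\cG_k)=\chi$ for all $k$; within each block of $T$ iterations, rewire one cluster's incident edges in $T$ equal-sized chunks of at most $cn^\alpha$ edges each. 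After a full block, the cluster whose position along the chain previously held the designated node $s$ has been displaced, and the pattern repeats cyclically along the chain. The net effect mimics Kovalev's rotating construction every $T$ rounds while always meeting the $\Delta\leq cn^\alpha$ budget.

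The analysis then mirrors \cite{kovalev2021lower}: I track the quantity $m_i(p) = \max\{j\in\N:\, e_j\otimes e_d\in\cH_i(p)\}$ (the "deepest" Nesterov coordinate reached), and show by induction over the blocks that a full block of $T$ iterations increases $\max_i m_i$ by at most a constant. Because $T\cdot$(constant)$=\Theta(\chi/K)$ when the cluster size and bridge weights are calibrated to force $\chi(\cG_k)=\chi$, this yields $\max_i m_i(p)\leq Kp/\chi$. Substituting into the Nesterov lower bound $\|\bx_p-\bx_*\|^2\geq(1-2\sqrt{6}\sqrt{\mu/L})^{2m(p)+2}\|\bx_0-\bx_*\|^2$ and letting $K(\alpha,c)$ absorb the polynomial bookkeeping produces the stated inequality.

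The main obstacle will be the joint calibration in the graph construction: one must choose cluster sizes, bridge weights, and block length $T$ so that \emph{every} intermediate $\cG_k$ (not just the endpoints of a block) has condition number exactly $\chi$, while still guaranteeing the $K/\chi$-per-round bound on $m(p)$. The delicate point is ruling out that a clever algorithm exploits the fact that consecutive $\mW_k$ differ in only $O(n^\alpha)$ rank-one terms to perform a form of Chebyshev-type acceleration across the block; this is controlled by ensuring that partial rewirings still leave a $\chi$-length "pending bottleneck" between $s$ and $t$, so that no polynomial of degree $o(\chi)$ in the realized $\mW_1,\dots,\mW_p$ can move a vector past it.
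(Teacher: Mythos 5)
Your high-level skeleton (a Nesterov-type quadratic chain split between two groups of nodes, plus an adversarial rewiring schedule that slows information propagation) is the same as the paper's, but the construction you sketch does not deliver the quantitative facts the proof actually hinges on, and the central one appears to fail. The paper uses a Bethe tree $B_{d,k}$ of \emph{constant} depth $k\approx 1+1/\alpha$ and large degree $d$, for which $\chi\asymp n$ follows from known spectral bounds, together with an explicit swap schedule whose combinatorial invariant (Lemma~\ref{change_depth}) guarantees that at most $k-1$ vertices become ``bad'' per round; the infection must therefore sweep through all $|\cW|=\Theta(n)$ neutral vertices before reaching $\cV_2$, giving information flow $T=\Omega(\chi/(k-1))$, while each swap touches only $O(kd)=O(kn^{1/(k-1)})\le cn^\alpha$ edges. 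Your chain of $n/T$ clusters with $T=\lceil n^{1-\alpha}/c\rceil$ has no such invariant: within a block the graph is essentially static (only $cn^\alpha$ of its edges move per round), so under the span-based oracle the nonzero coordinates can hop one cluster per communication round and traverse the whole chain in $O(n/T)=O(cn^\alpha)$ rounds --- far fewer than the $\Theta(T)$ rounds your induction claims a block costs. The assertion that ``a full block of $T$ iterations increases $\max_i m_i$ by at most a constant'' is therefore not merely unproved but implausible for the stated topology; and your closing concern about Chebyshev acceleration across a block is a red herring, since the span model already accounts for arbitrary polynomials of the realized gossip matrices --- what must be controlled is graph-distance propagation combined with the odd/even alternation of the chain, which is exactly what the paper's swap-and-reroute scheme does.

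A second, more routine gap: you place the hard quadratics only at two nodes $s,t$ with weight $(L-\mu)/8$, so the global objective $\frac1n\sum_i f_i$ has smoothness $\mu+O((L-\mu)/n)$ and condition number tending to $1$ as $n\to\infty$; the advertised $\sqrt{L/\mu}$ factor then does not survive in the exponent. The paper avoids this by spreading the two halves of the chain over sets $\cV_1,\cV_2$ of size $\Theta(n)$ with a $1/|\cV_1|$ normalization in the vertex functions, which keeps the global condition number within a constant factor of the local one (inequality~\eqref{kappa_rels}). That fix is standard, but without it the final bound is not the one claimed.
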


    The following theorem corresponds to the case where the number of edges that can change per iteration is at most logarithmic in the number of nodes.

    \begin{theorem}\label{theorem:log}
        For any $L\geq\mu>0, L>10$, $M>0$ there exists $L$-smooth $\mu$-strongly convex decentralized problem $\DP$ with $n(\DP) = n > M,~ \chi(\DP)=\chi > M$, such that $\Delta(\DP) \le 12\log_2(n)$ and for any first-order decentralized algorithm for all $p\in\N$ we have
        {\small
        \begin{equation*}
            \norm{x_p-x_*}^2 \ge \(1 - \sqrt{10}\sqrt{\frac{\mu}{L}}\)^{\frac{12\log_2(\chi/2)p}{\chi}+2}\norm{x_0-x_*}^2.
        \end{equation*}
    	}
    \end{theorem}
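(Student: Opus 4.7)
I would extend the framework used for Theorem~\ref{theorem:poly} (mirroring \cite{scaman2017optimal,kovalev2021lower}) to the logarithmic-change regime. The outline has four steps: (i) fix a Nesterov-style hard quadratic and split it between two designated nodes $u_0,u_1$; (ii) design a graph sequence $\{\cG_k\}$ respecting $\Delta\le 12\log_2 n$ with $\chi(\cG_k)=\Theta(\chi)$; (iii) prove a propagation lemma bounding the coordinate support reachable in any node's memory after $p$ rounds; (iv) combine with Nesterov's classical lower bound. The point is that an $O(\log n)$ edge budget permits moderate $O(\log\chi)$-hop propagation per round, faster than the $O(1)$-hop propagation of the fully adversarial construction of \cite{kovalev2021lower} but slower than the $O(\chi^{1/2})$-hop propagation needed to recover the static rate of \cite{scaman2017optimal}, and this is what produces the $1/\log\chi$ speed-up in the final bound.

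\textbf{Hard instance.} For step (i) I would take the classical chain quadratic
\[
f(x)=\frac{L-\mu}{8}\Bigl(x_1^2+\sum_{i=1}^{m-1}(x_i-x_{i+1})^2\Bigr)-\frac{L-\mu}{4}x_1+\frac{\mu}{2}\norm{x}^2,
\]
and split its coordinate-difference terms between $u_0$ and $u_1$ by parity of $i$, exactly as in the original Scaman construction. A standard Nesterov-style estimate then gives
\[
\norm{x_p-x_*}^2\ge\bigl(1-\sqrt{10}\sqrt{\mu/L}\bigr)^{2s+2}\norm{x_0-x_*}^2
\]
whenever $x_p$'s support is contained in the first $s$ coordinates, so the theorem reduces to bounding $s$ as a function of $p$ and $\chi$.

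\textbf{Graph construction and propagation.} For steps (ii) and (iii), the strategy is to build a long corridor of $\Theta(\chi)$ nodes between $u_0$ and $u_1$, together with a sparse hierarchical set of shortcut edges, for example a binary-lifting pattern of $\log_2(\chi/2)$ levels, whose positions can be shifted each round by toggling at most $12\log_2 n$ edges. Organizing the shortcuts this way is meant to ensure that no matter how the adversary shifts them, information can advance by at most $\log_2(\chi/2)$ hops along the corridor per communication round. A Cheeger/Poincar\'e argument then bounds $\chi(\cG_k)=\Theta(\chi)$ uniformly in $k$. By induction on $p$ one obtains $s\le \lceil 6\log_2(\chi/2)\cdot p/\chi\rceil$, which fed into the Nesterov estimate above delivers precisely the announced factor $\bigl(1-\sqrt{10}\sqrt{\mu/L}\bigr)^{12\log_2(\chi/2)p/\chi+2}$.

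\textbf{Main obstacle.} The hardest step is the combinatorial one: simultaneously achieving $\Delta\le 12\log_2 n$, a uniform $\chi(\cG_k)=\Theta(\chi)$, and a propagation rate of exactly $\log_2(\chi/2)$ hops per round requires a carefully balanced graph family. Proving that no rewiring within the $\log_2 n$ budget can accelerate the $u_0\leftrightarrow u_1$ information transfer beyond this amount is a shortest-path-type argument over the family of admissible graph sequences, and is where the main technical work lives. Once this construction and the accompanying propagation bound are pinned down, the spectral estimate and the convex-analytic Nesterov step are essentially routine, so all the novelty relative to Theorem~\ref{theorem:poly} sits in the graph-theoretic design.
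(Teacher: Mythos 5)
Your proposal reproduces the outer template correctly (chain quadratic split by parity, a propagation bound on which coordinates can be nonzero after $p$ rounds, then Nesterov's estimate), but the entire content of this theorem lives in the combinatorial construction of the graph sequence, and that is exactly the step you defer ("where the main technical work lives"). What you do sketch is moreover not obviously workable. First, the spectral claim: a corridor of $\Theta(\chi)$ nodes is a path, whose Laplacian condition number is $\Theta(\chi^2)$, not $\Theta(\chi)$; overlaying binary-lifting shortcuts shrinks the diameter to $O(\log\chi)$ and changes $\lambda_{\min}^+$ by an uncontrolled amount, so the assertion $\chi(\cG_k)=\Theta(\chi)$ uniformly in $k$ is unsupported and likely false for this family. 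Second, the budget accounting: to cap the informed frontier's advance at $\log_2(\chi/2)$ corridor positions per round you must, every round, deactivate every shortcut leaving a newly informed node and landing beyond the allowed window; with $\Theta(\log n)$ shortcut scales per node and $\Theta(\log\chi)$ newly informed nodes per round this costs $\Theta(\log^2 n)$ edge toggles, which exceeds the $12\log_2 n$ budget. Third, splitting the chain between two single nodes $u_0,u_1$ breaks the constants: the global objective is $\frac{1}{n}\sum_i f_i$, so concentrating the $(L-\mu)$-part on two nodes makes the local and global condition numbers differ by a factor of $n$; the $\sqrt{10}$ in the statement comes precisely from spreading the chain over two vertex sets of size at least $n/5$ each.

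The paper's route avoids all of this. It takes a complete binary tree $B_{2,k}$ with $n=2^k-1$, for which known algebraic-connectivity bounds give $2n\le\chi\le 6n$ outright, and places the even-indexed and odd-indexed difference terms on two disjoint subtrees $\cV_1,\cV_2$ of size $2^{k-2}-1=\Theta(n)$ rooted at grandchildren of the root (yielding $\kappa_g\ge\frac{2}{5}(\kappa_l-1)+1$ and hence the factor $1-\sqrt{10}\sqrt{\mu/L}$). The delay mechanism is not hop-distance along a long path but an adversarial "bad-vertex spreading" process: all of $\cV_1$ starts bad, each round every good vertex adjacent to a bad one becomes bad, and the adversary swaps the edges of each newly bad vertex with the leftmost remaining good vertex so that the infection must consume the entire neutral set $\cW$ (of size $\ge n/2$) before touching $\cV_2$. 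The key lemma shows at most $k-1$ vertices become bad per round, so $\Delta_i\le 4(k-1)(d_{\max}+1)=12(k-1)\le 12\log_2 n$ while $T\ge|\cW|/(k-1)\ge\chi/(12\log_2(\chi/2))$. You would need to either carry out this swap-based construction or fully substantiate your shortcut corridor; as written, the proposal has a genuine gap at its core.
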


    \begin{corollary}
		For any $L\geq\mu>0,~ L > 10$ there exists $L$ smooth and $\mu$-strongly convex decentralized problem $\DP$ with sufficiently large $\chi(\DP) = \chi,~ n(\DP) = n$, such that $\Delta(\DP) \le 12\log_2 n$, and for any first-order decentralized algorithm the number of communication rounds to find an $\epsilon$-accurate solution of the problem \ref{main_problem} is lower bounded by
		\begin{equation*}
		\Omega\(\frac{\chi}{\log \chi}\sqrt{L/\mu}\log{\frac{1}{\epsilon}}\).
		\end{equation*}
	\end{corollary}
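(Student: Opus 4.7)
My plan is to adapt the lower-bound strategy of \cite{kovalev2021lower} (used already in Theorem~\ref{theorem:poly}) to the slower-changing regime $\Delta(\DP)\le 12\log_2(n)$. The argument has three ingredients: (i) the standard Nesterov quadratic chain, split across two distinguished vertices; (ii) a carefully chosen sequence of graphs that respects the edge-change budget while still forcing slow propagation between those vertices; and (iii) a span-growth lemma that couples the two and produces the factor $12\log_2(\chi/2)/\chi$ in the exponent.

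For the hard instance I would reuse the construction of \cite{scaman2017optimal,kovalev2021lower}: pick two special nodes $u,v\in\cV$, let $f_u$ be the sum of the odd-indexed quadratic pieces of the Nesterov chain on $\ell_2$, let $f_v$ be the sum of the even-indexed ones, and set $f_i\equiv 0$ elsewhere. Then calling $\nabla f_u$ can unlock at most one new odd coordinate (given a known even one), and symmetrically for $\nabla f_v$, so by a standard induction on the memories $\cH_i(k)$ one obtains
\begin{equation*}
\norm{x_p-x_*}^2\ge \(1-\sqrt{10}\sqrt{\mu/L}\)^{2T(p)+2}\norm{x_0-x_*}^2,
\end{equation*}
where $T(p)$ is the maximal index unlocked anywhere in the network after $p$ rounds. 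It then suffices to show $T(p)\le 6\log_2(\chi/2)\,p/\chi$.

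Next, I would build the graph sequence $\{\cG_k\}$ on $n$ vertices so that $\chi(\cG_k)\le\chi$ for every $k$, $|\cE_k\triangle\cE_{k+1}|\le 12\log_2 n$, and every $u$-to-$v$ walk in $\cG_k$ must either traverse a long fixed skeleton or use at most $\log_2(\chi/2)$ shortcut edges that have been added since the previous round. A natural candidate is a path-like skeleton of length $\Theta(\chi)$ augmented by a dyadic family of jumps of lengths $1,2,4,\dots,2^{\lfloor\log_2 n\rfloor}$, with only the dyadic shortcuts toggled between consecutive rounds; this costs $\le 12\log_2 n$ flips per round, and the Laplacian condition number can be controlled uniformly by treating the shortcuts as a low-rank perturbation of the path Laplacian. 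With this family the span-growth lemma
\begin{equation*}
\cH_i(k)\subseteq \spn\braces{e_1,\dots,e_{\lfloor 6\log_2(\chi/2)\,k/\chi\rfloor+1}}
\end{equation*}
follows by induction on $k$: a local step adds at most one new basis vector on $u$ or $v$, and a communication step propagates the maximal unlocked coordinate along the $u$-to-$v$ geodesic by at most the shortcut budget $\log_2(\chi/2)$ divided by the skeleton length $\Theta(\chi)$. Combined with the displayed inequality above, this yields the stated bound.

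The main obstacle, I expect, is the construction of the graph family: the three demands on $\{\cG_k\}$ (uniformly bounded condition number, small edge-change budget, slow effective $u$-to-$v$ propagation) are in tension, and the constant $12$ in the edge-change budget has to be matched against the constant $12$ appearing in the exponent of the theorem, so the dyadic shortcut bookkeeping must be done with some care. Once such a family exists, the span-growth induction and the reduction to the Nesterov quadratic recursion should follow the blueprint of \cite{scaman2017optimal,kovalev2021lower} with only minor modifications to account for the logarithmic change-budget.
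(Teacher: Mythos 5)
Your overall skeleton (Nesterov chain split across two distinguished regions, a span-growth induction on the memories $\cH_i(k)$, and a propagation-time bound $T=\Omega(\chi/\log\chi)$) matches the paper's strategy, but the heart of the argument --- the adversarial graph sequence --- is missing, and the candidate you name does not work. A path skeleton of length $\Theta(\chi)$ has Laplacian condition number $\Theta(\chi^2)$, not $\chi$, so it violates the requirement $\chi(\DP)=\chi$ before any shortcuts are added; and once you add dyadic shortcuts to fix the spectrum, the $u$--$v$ distance collapses to $O(\log n)$ and you have given no mechanism by which toggling $O(\log n)$ edges per round prevents information from crossing in $O(\log n)$ communication rounds. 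Your ``propagation per round $\le$ shortcut budget divided by skeleton length'' step is exactly the point where the three constraints (bounded $\chi$, bounded $\Delta$, slow propagation) collide, and asserting it by induction does not resolve the tension you yourself flag.

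The paper resolves this differently: the hard instance is a balanced binary tree $B_{2,k}$ (so $\chi=\Theta(n)$ with $n=2^k-1$, which is why $\chi$ linear in $n$ is attainable at all), with the chain functions spread over two large subtrees $\cV_1,\cV_2$ of size $\Theta(n)$ (this also controls the local-versus-global condition number and produces the constant $\sqrt{10}$). The adversary runs a ``bad-vertex spreading plus swap'' scheme: at each round the set of newly infected vertices $U$ has size at most $k-1=O(\log n)$ (Lemma~\ref{change_depth}, proved via a tree-order invariant), each such vertex is swapped with a fresh ``good'' vertex at cost $O(\deg)=O(1)$ edge flips, so $\Delta_i\le 12(k-1)\le 12\log_2 n$, while exhausting the $\Theta(n)$ neutral vertices takes $T\ge|\cW|/(k-1)=\Omega(\chi/\log\chi)$ rounds. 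The single combinatorial lemma $|U|\le k-1$ simultaneously certifies the edge budget and the slow information flow; your proposal has no analogue of it, and without one the corollary is not established.
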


%    \begin{corollary}
%        For any $L>10\mu>0$ there exist $B>0$ and a sequence of decentralized problems $\{\DP_k\}_{k=1}^\infty$ with $n_k = n(\DP_k)$, such that $\forall k \in \N$ $n_k < n_{k+1}$, $\lim_{k \to \infty}\chi_k = \infty$, $\Delta(\DP_k) \le 12\log_2 n_k$, $\DP_k$ is $L$ smooth and $\mu$-strongly convex, $\forall k\in\N$ and for every $\DP_k$, for any first-order decentralized algorithm, the number of decentralized communication rounds to find an $\epsilon$-accurate solution of the problem \ref{main_problem} is lower bounded by
%        \begin{equation*}
%            B\frac{\chi_k}{\log \chi_k}\sqrt{L/\mu}\log{\frac{1}{\epsilon}}=\Omega\(\frac{\chi_k}{\log \chi_k}\sqrt{L/\mu}\log{\frac{1}{\epsilon}}\).
%        \end{equation*}
%    \end{corollary}

    As we can see, although the logarithmic constraints are tighter than the polynomial ones, the problem cannot be solved much faster. The benefit we get from logarithmic constraints is only the logarithmic factor $\log\chi$, which is typically small compared to the main term $\chi$.

    The following theorem describes the case when the constraints on changes for sequential iteration are constant.

    \begin{theorem}\label{theorem:const}
        For any $L\geq\mu> 0,~ L > 24,~ M > 0$, $d \in \N$ there exists a constant $K(d)>0$ and $L$-smooth $\mu$-strongly convex decentralized problem $\DP$ with $n(\DP) = n > M,~ \chi(\DP) = \chi > M$, $\Delta(\DP) \le 12(d-1)$, such that for any first-order decentralized algorithm for all $p\in\N$ we have
        \begin{equation*}
            \norm{x_p-x_*}^2 \ge \(1 - 2\sqrt{6}\sqrt{\frac{\mu}{L}}\)^{K(d)p\chi^{-\frac{d}{d+1}}+2}\norm{x_0-x_*}^2.
        \end{equation*}
    \end{theorem}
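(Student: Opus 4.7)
The plan is to adapt the Scaman--Nesterov style lower bound of Theorems~\ref{theorem:poly} and~\ref{theorem:log} to the sparser change regime $\Delta(\DP) \le 12(d-1)$, and to show that in this regime the effective information-propagation speed is $\Theta(\chi^{-d/(d+1)})$ rather than $\Theta(\chi^{-1})$ or $\Theta(\chi^{-1/2})$. The proof has three ingredients: a hard quadratic objective split between two distinguished nodes, a carefully designed sequence of graphs, and a counting argument bounding information propagation.

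First I would fix the objective. Following Nesterov's classical construction used in \cite{scaman2017optimal} and \cite{kovalev2021lower}, take an infinite-dimensional quadratic $f$ whose optimizer $x_\ast$ has geometrically decaying coordinates, $x_\ast^{(i)} \sim q^i$ with $q$ depending on $\sqrt{\mu/L}$, and split it between two ``extreme'' nodes $A$ and $B$ so that the $i$-th coordinate of any iterate can only become nonzero at a node after information has been exchanged between $A$ and $B$ a total of $i$ times. The analytic decay $q^i$ then turns a lower bound on the number of communication rounds needed per round-trip into the claimed geometric lower bound on $\|x_p - x_\ast\|^2$.

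Second, I would construct the sequence $\{\cG_k\}$ on $n$ vertices with $\chi(\cG_k)\le\chi$ and $\Delta(\DP)\le 12(d-1)$, engineered so that a single round-trip $A \to B \to A$ requires at least $\Omega(\chi^{d/(d+1)})$ rounds. A natural blueprint is a hierarchical $d$-level ``chain of chains'': at each of the $d$ levels one inserts a chain whose length is $\Theta(\chi^{1/(d+1)})$, placing $A$ and $B$ at opposite corners of the full hierarchy. The condition number of the resulting graph is controlled by $\chi$, and the (static) diameter is already of order $d\,\chi^{1/(d+1)}$. To push the propagation time up to $\Omega(\chi^{d/(d+1)})$, at each iteration I would rotate a small set of bridging edges at one level of the hierarchy, using $12$ edge changes per level to swap in a new ``connector,'' for a total budget of $12(d-1)$ changes per step. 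The limiting regimes match known results: $d=1$ reduces to a static chain with the $\chi^{1/2}$ bound of \cite{scaman2017optimal}, while $d\to\infty$ approaches the rotating-star construction of \cite{kovalev2021lower} with the $\chi$ bound.

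Third, I would formalize the information-propagation lemma: for the constructed $\{\cG_k\}$, any $i$ round-trips between $A$ and $B$ require at least $K(d)\, i\, \chi^{d/(d+1)}$ communication rounds. This is done by induction on the hierarchy level, tracking the reachable subspace $\cH_j(k)$ in the same style as the propagation lemmas of Theorems~\ref{theorem:poly} and~\ref{theorem:log}; the constant-$\Delta$ rotation at each level forces $\Theta(\chi^{1/(d+1)})$ rounds per level per crossing, and the $d$ levels compose multiplicatively to give the $\chi^{d/(d+1)}$ factor. Composing with the Nesterov coordinate-decay estimate from Step~1 yields the exponent $K(d)\, p\, \chi^{-d/(d+1)}+2$ in the stated bound.

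The main obstacle will be the simultaneous balance in Step~2: the graph family must have condition number bounded by $\chi$ throughout \emph{and} exhibit provably slow propagation \emph{and} respect the tight constant budget of $12(d-1)$ edge changes per step. Proving the composed lower bound on propagation across $d$ hierarchy levels is strictly more delicate than the one-level arguments behind Theorems~\ref{theorem:poly} and~\ref{theorem:log}, because one must rule out shortcuts in which the algorithm exploits information acquired across several levels in parallel; this is where the inductive subspace-reachability argument does the real work.
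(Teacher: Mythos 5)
Your construction is essentially the paper's: the graph used there, $H_{d,k}$, is exactly your $d$-level ``chain of chains'' with chains of length $k\sim\chi^{1/(d+1)}$, the objective is the Nesterov quadratic split between two vertex classes, and the adversary's budget of $12(d-1)$ edge changes comes from swapping at most $d-1$ vertices of degree at most $3$ per round. The gap is in your key propagation lemma. You argue that each of the $d$ levels forces $\Theta(\chi^{1/(d+1)})$ rounds ``per crossing'' and that these ``compose multiplicatively'' to $\chi^{d/(d+1)}$. As stated this cannot work: the static diameter of the hierarchy is only $O(d\,\chi^{1/(d+1)})$, so crossing all $d$ levels once costs only the \emph{sum} of the per-level crossing times, and a multiplicative composition would require a recursive reset structure (re-traversing all lower levels for every single step at a higher level) that rotating ``connectors'' does not provide and that you do not establish. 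The paper's actual argument is additive, not multiplicative, and it counts vertices rather than levels: the swap strategy relocates each newly infected boundary vertex to the far end of the ``good'' region, and a short invariant argument (no good vertex is ever smaller than a bad one in a suitable linear order on coordinates) shows the infection frontier contains at most one vertex per level, i.e.\ at most $d$ newly infected vertices per communication round. Since $|\cW|=\Omega(n)$ vertices must be infected before $\cV_2$ is reached, one traversal costs $T\ge|\cW|/d=\Omega(n/d)$ rounds, and the exponent then appears purely spectrally: $\lambda_{\min}^+(\Lap(H_{d,k}))\ge 1/(nD)$ with $D=(2d-1)k$ and bounded degree give $\chi\le 6d(2d-1)k^{d+1}$, while $n\sim k^d$, hence $n=\Omega\bigl((\chi/(6d(2d-1)))^{d/(d+1)}\bigr)$.

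Two smaller points. First, placing the hard quadratics on two single nodes $A,B$ rather than on two sets $\cV_1,\cV_2$ of size $\Theta(n)$ breaks the relation $\kappa_g\ge(\kappa_l-1)/6+1$ between local and global condition numbers that yields the explicit constant $2\sqrt{6}\sqrt{\mu/L}$ in the statement; you need the hard terms spread over $|\cV_1|=|\cV_2|=\Theta(n)$ vertices. Second, without the frontier-size invariant the bound $\Delta(\DP)\le 12(d-1)$ itself is not justified: it is precisely the fact that at most $d$ vertices join the frontier per round, of which the one at the last level need not be swapped, that caps the number of changed edges at $12(d-1)$. So the propagation lower bound and the change-budget constraint are consequences of the same combinatorial lemma, not of two separate mechanisms.
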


    \begin{corollary}
		For any $L\geq\mu> 0,~ L > 24,~ d\in \N$ there exists $L$-smooth $\mu$-strongly convex decentralized problem $\DP$ with sufficiently large $\chi(\DP) = \chi$ and $n(\DP) = n$, such that $\Delta(\DP) \le 12(d - 1)$, and for any first-order decentralized algorithm the number of communication rounds to find an $\epsilon$-accurate solution of the problem \ref{main_problem} is lower bounded by
		\begin{equation*}
		\Omega\(\chi^{\frac{d}{d+1}}\sqrt{L/\mu}\log{\frac{1}{\epsilon}}\).
		\end{equation*}
	\end{corollary}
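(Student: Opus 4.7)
The plan is to follow the standard template for decentralized lower bounds initiated by \cite{scaman2017optimal} and refined for time-varying graphs by \cite{kovalev2021lower}: construct a hard graph family $\{\cG_k\}$, split Nesterov's worst-case tridiagonal quadratic across two ``anchor'' vertices, and bound the rate at which the local spans $\cH_i(k)$ can activate new coordinates via a reachability argument on the evolving graph. Since Theorem~\ref{theorem:const} must interpolate between the static bound of \cite{scaman2017optimal} ($d=1$, giving $\chi^{1/2}$) and the fully time-varying bound of \cite{kovalev2021lower} ($d\to\infty$, giving $\chi$), the central difficulty is a graph construction whose ``effective diameter'' is $\Theta(\chi^{d/(d+1)})$ even though the per-iteration edge budget is only $12(d-1)$.

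For the construction, I would take an underlying long path (or cycle) of $n\gg M$ vertices augmented by $d-1$ ``shortcut'' edges that migrate slowly along the path. Each shortcut is moved by reattaching one endpoint per iteration; a careful bookkeeping fits the total modification within $12(d-1)$ edge swaps per step, so $\Delta(\DP)\le 12(d-1)$. The $d-1$ shortcuts are organized at $d-1$ geometrically spaced length scales of order $\chi^{i/(d+1)}$ for $i=1,\dots,d-1$, analogous to a hierarchical small-world graph. A direct spectral estimate, together with the fact that every $\cG_k$ contains the underlying path and so is connected, yields $\chi(\cG_k)\le\chi$ uniformly. For $d=1$ there are no shortcuts, recovering the static path with $\chi\sim n^2$; as $d\to\infty$ the construction approaches the arbitrary-change regime.

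Next, I would install Nesterov's worst-case function as in \cite{scaman2017optimal}: the odd-diagonal block of the tridiagonal quadratic sits at one anchor vertex, the even-diagonal block at the other, and every remaining vertex carries only a trivial $\tfrac{\mu}{2}\norm{x}^2$ term. Routine verification gives $L$-smoothness and $\mu$-strong convexity of $f$. The core combinatorial step is then to show that, for every execution of every first-order decentralized algorithm, the subspace $\cH_i(k)$ can ``activate'' one additional Nesterov coordinate only after $\Omega(\chi^{d/(d+1)})$ communication rounds. This reduces to a lower bound on the \emph{effective distance} between the two anchor vertices in $\{\cG_k\}$: even if the algorithm routes through whichever shortcut edges happen to be active, the slow migration of each shortcut prevents it from collapsing the path length. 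A counting argument charges each shortcut of scale $\chi^{i/(d+1)}$ with a traversal cost proportional to $\chi^{1/(d+1)}$, so that traversing the whole path costs $\Omega(\chi^{d/(d+1)})$ rounds per coordinate. This effective-distance bound is what I expect to be the main obstacle, since it must carefully exclude all adaptive routing strategies the algorithm could use under the $12(d-1)$ modification budget.

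Finally, combining the span-growth bound with the standard Nesterov analysis for the worst-case tridiagonal quadratic (as in \cite{nesterov2004introduction}) converts one-coordinate-per-$\Omega(\chi^{d/(d+1)})$-rounds into the geometric decay $\left(1-2\sqrt{6}\sqrt{\mu/L}\right)^{K(d)p\chi^{-d/(d+1)}+2}$, with $K(d)$ an explicit positive constant coming from the hierarchical scales and the $12(d-1)$ budget. The two limiting cases $d=1$ and $d\to\infty$ then recover the bounds of \cite{scaman2017optimal} and \cite{kovalev2021lower}, matching the interpolation advertised in Table~\ref{tab:lower_bounds}.
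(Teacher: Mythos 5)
There is a genuine gap in your construction. In the span-based lower-bound model, the quantity you must bound below is the number of communication rounds $T$ for information to propagate from one anchor set to the other, and this propagation is simply neighborhood reachability: at every round the informed set gains at least one vertex because each $\cG_k$ is connected. Hence $T\le n-1$ unconditionally, regardless of how cleverly the shortcuts migrate. At the same time, a path on $n$ vertices augmented by $d-1$ extra edges still has $\lambda_{\min}^+ = O(d^2/n^2)$ (adding an edge adds a rank-one PSD term to the Laplacian, so by Weyl interlacing $\lambda_{\min}^+$ of the augmented graph is at most the $(d+1)$-st smallest eigenvalue of the path, which is $\Theta(d^2/n^2)$), while $\lambda_{\max}\ge 2$. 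Thus $\chi = \Omega(n^2/d^2)$ for every graph in your sequence, and combining with $T\le n$ gives $T = O(d\,\chi^{1/2})$. Your construction therefore can never certify a lower bound beyond the static-path rate $\chi^{1/2}$, for any $d$; the ``effective distance'' analysis you anticipate as the main obstacle cannot rescue this, because the obstruction is the relationship between $n$ and $\chi$, not adaptive routing (which does not exist in this oracle model).

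What is actually needed is a graph that is far \emph{better} connected than a path, with $\chi = O(n^{(d+1)/d})$, so that $n$ itself is already $\Omega(\chi^{d/(d+1)})$; the difficulty then shifts to forcing information to visit a constant fraction of all $n$ vertices while changing only $12(d-1)$ edges per round. The paper does this with the recursively defined tree $H_{d,k}$ (a $k$-path with a copy of $H_{d-1,k}$ hung off each path vertex), which has $n = k+k^2+\cdots+k^d$ and diameter $(2d-1)k$, whence $\lambda_{\min}^+\ge 1/(nD)$ gives $\chi \le 6d(2d-1)n^{(d+1)/d}$. An adversarial rewiring scheme (swap operations preserving an order invariant on the tree) guarantees that at most $d$ vertices become ``informed'' per round at a cost of at most $12(d-1)$ edge changes, so $T\ge n/(3d) = \Omega(\chi^{d/(d+1)})$. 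Your choice of the two anchor vertex sets and the split Nesterov quadratic matches the paper, but without replacing the path by such a shallow, high-volume graph the claimed exponent $d/(d+1)$ is unreachable.
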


%    \begin{corollary}
%        For any $L>24\mu>0$, $d\in \N$ there exist $B>0$ and a sequence of decentralized problems $\{\DP_k\}_{i=1}^\infty$ with $n_k = n(\DP_k)$, such that $\forall k \in \N$ $n_k < n_{k+1}$, $\lim_{k \to \infty}\chi_k = \infty$, $\Delta(\DP_k) \le 12(d - 1)$, $\DP_k$ is $L$ smooth and $\mu$-strongly convex, $\forall k\in\N$ and for every $\DP_k$, for any first-order decentralized algorithm, the number of decentralized communication rounds to find an $\epsilon$-accurate solution of the problem \ref{main_problem} is lower bounded by
%        \begin{equation*}
%            \chi_k^{\frac{d}{d+1}}\sqrt{L/\mu}\log{\frac{1}{\epsilon}}=\Omega\(\chi_k^{\frac{d}{d+1}}\sqrt{L/\mu}\log{\frac{1}{\epsilon}}\).
%        \end{equation*}
%    \end{corollary}

    This result shows that even with constant constraints, the lower estimates approach the estimates without constraints. This indicates that the criterion based on the Laplacian matrix condition number is very sensitive to the degree of graph variability. %It is also worth mentioning that when $d=1$ (no changes in the graph), the result coincides with the classical lower bound from \cite{scaman2017optimal}.

	\subsection{Discussion}
	
	In summary, lower bounds on the number of communications have been obtained for the decentralized optimization of smooth strongly convex functions with restrictions on network change rate. In particular, three modes were considered, which differ in the speed of changing of the communication graph.
	
	Polynomial change. The first mode assumes a polynomial maximum change in the number of edges in the graph; as it turned out, such a setting gives the lower bounds in this case coincide with lower bounds when no additional conditions are imposed on the change of the graph.
	
	The second mode considers a maximum logarithmic change in the number of edges in the graph. In this case, the lower bounds turned out to be close to the estimates in the case of time-varying networks without restrictions.

	The third mode considers a constant change in the number of edges; here, for each value of the constant $d$, an individual estimate is obtained. When $d = 1$, we restore the lower bound for static graphs \cite{scaman2017optimal} and when $d\to\infty$, we approach a lower bound for time-varying networks with no restrictions on the speed of changes \cite{kovalev2021lower}. In other words, the lower bounds for the last mode can be seen as an interpolation between static and time-varying networks without restrictions.
	
	\textbf{Meaning of lower bounds}. It is worth noting that the interpretation of our lower complexity bounds is different from previous results in \cite{kovalev2021lower} and \cite{scaman2017optimal}. The mentioned papers build an example of a optimization problem for any $\chi > 0$, while our results suggest that a counterexample exists only if $\chi$ is sufficiently large. Let us illustrate how to interpret the lower bounds in the regime of polynomial change. Theorem~\ref{theorem:poly} means that 

	{
		\centering
		\textit{A decentralized optimization method that solves \textbf{any} decentralized problem with polynomial bound on changes in $O(\chi^p\sqrt{L/\mu}\log(1/\eps)),~ p < 1$ communication rounds \textbf{does not exist}.
		}
	}

	In other words, our results are asymptotic, but they are sufficient to restrict the area for future research.

%	{
%		\centering
%		\textit{If one aims to develop a decentralized method for the class of problems $\cP(c, \alpha) = \{\DP:~\Delta(\DP)\leq cn^\alpha\}$ (where $c, \alpha> 0$ are constants) with number of communication rounds $O(\chi^p\sqrt{L/\mu}\log(1/\eps)),~ p < 1$, he will surely fail.}
%	}

\section{Accelerated Gossip for Time-Varying Graphs with Connected Skeleton}\label{sec:accelerated_gossip_tw}

%\arogozin{Remark: this acceleration only works with Laplacians. A crucial part is that $\mW^{k} - \mW^{k+1}$ is positive semi-definite. It holds if $\mW^k$ are Laplacians, i.e. $\mW^k = \mL^k$. If we take scaled Laplacians $\mW^k = \mL^k/\lambda_{\max}(\mL^k)$, this monotonicity does not hold. If we take $\mW^k = \mI - \mM^k$, where $\mM^k$ is a Metropolis weights mixing matrix, this monotonicity does not hold either.}

\subsection{Time-Varying Graphs with Connected Skeleton}

It is known that the number of communications cannot be enhanced on the class of time-varying graphs that are allowed to change arbitrarily but stay connected at each iteration. The corresponding lower complexity bounds have been proposed in \cite{kovalev2021lower}. However, the lower bounds in \cite{kovalev2021adom} are built using a graph where $O(n)$ edges change at each time step. Namely, a "bad" graph is a star graph where the center of a star changes at each iteration (see Figure~\ref{fig:example_adom_plus}).

In practice, a situation where $O(n)$ edges change at every iteration may not always occur. The amplitude of network malfunctions may be not so large. We let all of the graphs in the sequence have a common subgraph (a \textit{skeleton}) that remains connected through time.
\begin{assumption}\label{assum:connected_skeleton}
	Graph sequence $\{\cG_k = (\cV, \cE_k)\}_{k=0}^\infty$ has a connected skeleton: there exists a connected graph $\hat\cG = (\cV, \hat\cE)$ such that for all $k = 0, 1, \ldots$ we have $\hat\cE\subseteq\cE_k$.
\end{assumption}
\begin{assumption}\label{assum:connected_skeleton_bounded_spectrum}
	For each $k = 0, 1, \ldots$ we have $\lambda_{\max}(\Lap(\cG_k)) \leq \lambda_{\max}$. Moreover, we have $\lambda_{\min}^+\leq \lambda_{\min}^+(\hat \cG)$.
\end{assumption}
Assumption~\ref{assum:connected_skeleton} is more strict then the assumption on the graph staying connected at each iteration. However, under Assumption~\ref{assum:connected_skeleton} we propose an accelerated consensus procedure over time-varying graphs.
%To the best of our knowledge, our procedure is the first attempt to get acceleration in communication over time-varying graphs.

\subsection{Accelerated Gossip with Non-Recoverable Links}

A common approach to accelerated consensus over static graphs is Chebyshev acceleration proposed in \cite{scaman2017optimal}. A gossip matrix $\mW$ with condition number $\chi(\mW)$ can be replaced by a matrix polynomial $P_K(\mW)$ of degree $K = \lceil (\chi(\mW))^{1/2} \rceil$ with condition number $\chi(P_K(\mW)) = O(1)$. The construction of $P_K(\mW)$ is based on Chebyshev polynomials of first type. Then, the condition number of communication matrix is reduced from $\chi(\mW)$ to $O(1)$ at the cost of performing $\lceil(\chi(\mW))^{1/2}\rceil$ communication rounds instead of one.

However, Chebyshev method is only known to be applied to consensus over static networks. In our work, we propose an accelerated gossip scheme over time varying graphs based on Nesterov acceleration. The acceleration is possible because of assumption on connected skeleton (Assumption~\ref{assum:connected_skeleton}) and due to a specific consensus strategy.% that we call \textit{non-recoverable links}.

We use the following approach to tackle with time-varying graphs that have a connected skeleton. Let agent $i$ in the network stop exchanging information to agent $j$ once the connection between $i$ and $j$ has been lost at any communication round. In other words, if a link fails once, the communication through it is not recovered afterwards. This procedure is referred to as \textit{accelerated gossip with non-recoverable links}.
\begin{algorithm}[ht]
	\caption{Accelerated Gossip with Non-Recoverable Links}
	\label{alg:accelerated_gossip_tw}
	\begin{algorithmic}[1]
		\REQUIRE Initial guess $\bx\in\R^{nd}$, stepsizes $\eta, \beta> 0$. Set $\by^0 = \bx^0 = \bx$. %Define $\beta = \cbraces{\sqrt{\lambda_{\max}/\lambda_{\min}^+} - 1}/\cbraces{\sqrt{\lambda_{\max}/\lambda_{\min}^+} + 1}$, $\eta = 1/\lambda_{\max}$.}
		\STATE{Every node $i = 1, \ldots, n$ initializes set of neighbors $\cN_i = \cN_i^0$.}
		\FOR{$t = 0, 1, \ldots, T - 1$}
		\STATE{Every node $i$ does}
		\STATE{Update the set of nodes to which the node communicates: $\cN_i = \cN_i\cap\cN_i^k$}
		\STATE{$y_i^{k+1} = x_i^k - \eta (|\cN_i| x_i^k - \sum_{j\in\cN_i} x_j^k)$}
		\STATE{$x_i^{k+1} = \cbraces{1 + \beta}y_i^{k+1} - \beta y_i^k$}
		\ENDFOR
		\STATE{\algorithmicreturn~ $C_T(\bx) = \bx - \bx^T$}
	\end{algorithmic}
\end{algorithm}

Note that the output of Algorithm~\ref{alg:accelerated_gossip_tw} is $C_T(\bx)$. We claim that $C_T(\bx)$ is a linear operator that is a time-varying analogue of $P_K(\mW)\bx$.

\begin{theorem}\label{th:chebyshev_acceleration_tw}
	Let Assumptions \ref{assum:connected_skeleton} and \ref{assum:connected_skeleton_bounded_spectrum} hold. Denote $\chi = \lambda_{\max}/\lambda_{\min}^+$ and set the parameters of Algorithm~\ref{alg:accelerated_gossip_tw} to $\eta = 1/\lambda_{\max},~ \beta = (\sqrt\chi - 1)/(\sqrt\chi+ 1)$. Then operator $C_T(\bx)$ defined in Algorithm~\ref{alg:accelerated_gossip_tw} has the following properties.

	1. $C_T(\bx)$ is linear.
	
	2. $\range C_T(\bx) = \cL^\top = \{\bx\in\R^{md}:~ x_1 + \ldots + x_m = 0\}$.
	
	3. For $T = \sqrt\chi\log(4\chi)$ we have that for any $\bx\in\cL^\top$ it holds $(1 - 1/\sqrt{2})\norm{\bx}_2\leq \norm{C_T(\bx)}_2\leq (1 + 1/\sqrt{2})\norm{\bx}_2$.
\end{theorem}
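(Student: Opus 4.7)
My plan is to view Algorithm~\ref{alg:accelerated_gossip_tw} as Nesterov's accelerated gradient method applied to a sequence of quadratic objectives $f_k(\bz) = \tfrac{1}{2}\bz^\top \wt{\mL}_k \bz$, where $\wt{\mL}_k = \Lap(\wt{\cG}_k)\otimes \mI_d$ is the Laplacian of the ``intersection graph'' $\wt{\cG}_k$ whose edges are determined by the shrunken neighbour lists $\cN_i$ at iteration $k$. Because of the non-recoverable-links update rule $\cN_i \leftarrow \cN_i \cap \cN_i^k$, the graph $\wt{\cG}_k$ decreases monotonically in $k$ while always containing the skeleton $\hat\cG$; consequently the Laplacians form a PSD-monotone sequence sandwiched between $\Lap(\hat\cG)\otimes \mI_d$ from below and $\Lap(\cG_k)\otimes \mI_d$ from above. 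Combined with Assumption~\ref{assum:connected_skeleton_bounded_spectrum} this gives the uniform spectral bound $\lambda_{\min}^+ \mI \preceq \wt{\mL}_k \preceq \lambda_{\max} \mI$ on $\cL^\top$, so each $f_k$ is $\lambda_{\min}^+$-strongly convex and $\lambda_{\max}$-smooth there, and they share a common minimizer $\bz^* = 0$.

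Parts 1 and 2 are then essentially bookkeeping. Linearity holds by induction because each line of the algorithm is affine in $(\bx^k, \by^k)$ with $\bx$-independent operators $\mI - \eta \wt{\mL}_k$, so $C_T(\bx) = \bx - \bx^T$ is linear. For the inclusion $\range C_T \subseteq \cL^\top$, I would observe that $\one$ lies in the kernel of every $\wt{\mL}_k$, so a short induction along the recursion gives $\overline{\bx^k} = \overline{\bx}$ for every $k$, whence $C_T(\bx)$ has zero mean. The reverse inclusion will follow once part~3 is established: if $\|\bx^T\|_2 \leq \tfrac{1}{\sqrt 2}\|\bx\|_2$ for every $\bx\in\cL^\top$, then the restriction of $C_T$ to the finite-dimensional subspace $\cL^\top$ satisfies $\|I - C_T\|_{\mathrm{op}} \leq 1/\sqrt 2 < 1$, hence is invertible on $\cL^\top$ by a Neumann series, hence surjective.

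For part 3 I would change coordinates to $\bz^k = \bx^k - \Pi\bx$ and $\bw^k = \by^k - \Pi\bx$, where $\Pi$ is the orthogonal projection onto the consensus subspace. By the mean-preservation step above, for $\bx\in\cL^\top$ one has $\bz^0 = \bw^0 = \bx$, and the recursion restricted to $\cL^\top$ becomes exactly Nesterov's accelerated method
\begin{equation*}
\bw^{k+1} = (\mI - \eta \wt{\mL}_k)\bz^k, \qquad \bz^{k+1} = (1+\beta)\bw^{k+1} - \beta \bw^k,
\end{equation*}
with $\eta = 1/\lambda_{\max}$ and $\beta = (\sqrt\chi - 1)/(\sqrt\chi + 1)$ tuned to the uniform conditioning $\chi$. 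I would then set up a Nesterov-type Lyapunov $\Phi_k = f_k(\bz^k) + \tfrac{\lambda_{\min}^+}{2}\|\bu^k\|_2^2$, where $\bu^k$ is the auxiliary estimating-sequence point derived from $\bz^k, \bw^k$, and establish a one-step decrease $\Phi_{k+1} \leq (1 - 1/\sqrt\chi)\Phi_k$. Combined with $\Phi_0 \leq \lambda_{\max}\|\bx\|_2^2$ and $\tfrac{\lambda_{\min}^+}{2}\|\bz^T\|_2^2 \leq f_T(\bz^T) \leq \Phi_T$, unrolling yields $\|\bz^T\|_2^2 \leq 2\chi(1 - 1/\sqrt\chi)^T\|\bx\|_2^2$, which for $T = \sqrt\chi\log(4\chi)$ is at most $\tfrac{1}{2}\|\bx\|_2^2$. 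The two-sided bound on $\|C_T(\bx)\|_2$ then follows by applying the reverse and ordinary triangle inequalities to $C_T(\bx) = \bx - \bz^T$.

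The main obstacle I anticipate is the time-varying Lyapunov step: the classical Nesterov proof compares $f(\bz^{k+1})$ with $f(\bz^k)$ for a \emph{fixed} $f$, whereas here I must bound $f_{k+1}(\bz^{k+1})$ in terms of $f_k(\bz^k)$. The non-recoverable-links structure is exactly what makes this affordable, since the PSD-monotonicity $\wt{\mL}_{k+1} \preceq \wt{\mL}_k$ furnishes the free pointwise inequality $f_{k+1}(\bz) \leq f_k(\bz)$, which bridges the two quadratics appearing in $\Phi_{k+1}$ and $\Phi_k$. Verifying that the estimating-sequence inequalities still close under this substitution---possibly by first replacing the time-dependent $f_k(\bz^k)$ term in $\Phi_k$ with the common quadratic $\tfrac{\lambda_{\min}^+}{2}\|\bz^k\|_2^2$, which is dominated by every $f_k$---is where the bulk of the technical work will lie.
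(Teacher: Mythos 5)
Your proposal is correct and follows essentially the same route as the paper: the paper likewise reads Algorithm~\ref{alg:accelerated_gossip_tw} as Nesterov's method on the uniformly non-increasing quadratics $h_k(\bx)=\tfrac12\bx^\top\hat\mW^k\bx$, proves the one-step Lyapunov decrease (Lemma~\ref{lem:potential_change}, where the PSD-monotonicity of the intersection Laplacians supplies exactly the bridge $f_{k+1}(y_{k+1})\le f_k(y_{k+1})$ you anticipate), obtains $\norm{\bx^T-\ol\bx}_2^2\le 2\chi(1-1/\sqrt\chi)^T\norm{\bx^0-\ol\bx}_2^2$, and finishes with the same choice of $T$ and triangle inequalities. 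Your Neumann-series argument for surjectivity in part~2 is in fact more careful than the paper's one-line remark, but it is not a different method.
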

%\begin{proof}
%	Statement 1 follows directly from Algorithm~\ref{alg:accelerated_gossip_tw}. To see why statement 2 holds it is sufficient to note that $\range\mW^k = \cL^\top$.
%	
%	For statement 3, denote $\ol\bx = \frac{1}{m}\one\one^\top\otimes\mI$ let us apply Theorem~\ref{th:nesterov_method_time_varying_function_sequence} to see that
%	\begin{align*}
%	\norm{\bx^T - \ol\bx}_2^2\leq 2\chi\norm{\bx^0 - \ol\bx}_2^2 (1 - 1/\sqrt\chi)^T.
%	\end{align*}
%	Taking $T = \sqrt\chi\log(4\chi)$ we obtain $\norm{\bx^T - \ol\bx}_2^2\leq 1/2\norm{\bx^0 - \ol\bx}_2^2$. If $\bx\in\cL^\top$, then $\ol\bx = 0$ and following the definition $C_T(\bx) = \bx - \bx^T$, we write
%	\begin{align*}
%	\norm{C_T(\bx)}_2 &= \norm{\bx - \bx^T}_2\leq \norm{\bx - \ol\bx}_2 + \norm{\bx^T - \ol\bx}_2 \\
%	&\leq (1 + 1/\sqrt{2})\norm{\bx - \ol\bx}_2, \\
%	\norm{C_T(\bx)}_2 &= \norm{\bx - \bx^T}_2\geq \norm{\bx - \ol\bx}_2 - \norm{\bx^T - \ol\bx}_2 \\
%	&\geq (1 - 1/\sqrt{2})\norm{\bx - \ol\bx}_2
%	\end{align*}
%\end{proof}
The meaning of Theorem~\ref{th:chebyshev_acceleration_tw} is the following: for "monotone" graphs we can replace $\mW^k$ with condition number $\chi$ by $C_T(\cdot)$ with condition number $O(1)$. The payment for reduction of condition number is $\sqrt\chi\log(4\chi)$ communication rounds.

\subsection{Accelerated Gossip as Accelerated Method over Time-Varying Function}

Algorithm~\ref{alg:accelerated_gossip_tw} can be viewed as a gossip algorithm over a "monotonic" network where the edges only vanish and do not appear. Namely, introduce a sequence of graphs $\{\hat \cG_k = (\cV, \cap_{j=0}^k \cE_j)\}_{k=0}^\infty$, corresponding Laplacians $\{\hat W^k = \Lap(\hat \cG_k)\}_{k=0}^\infty$ and denote $\hat\mW^k = \hat W^k\otimes\mI_d$. Then Algorithm~\ref{alg:accelerated_gossip_tw} writes as
%\begin{subequations}
\begin{align}\label{eq:accelerated_gossip}
\begin{cases}
	\by^{t+1} = \bx^k - \eta\hat\mW^k\bx^k, \\
	\bx^{k+1} = (1 + \beta)\by^{k+1} - \beta\by^k.
\end{cases}
\end{align}
%\end{subequations}
As can be seen from \eqref{eq:accelerated_gossip}, on each time step a multiplication by $\hat\mW^k$ is performed, which corresponds to one communication over $\hat \cG_k$. The edges in sequence $\{\hat \cG_k\}_{k=0}^\infty$ only vanish and do not appear.

%\begin{assumption}
%	For graph sequence $\{\cG^k = (\cV, \cE^k)\}_{k=0}^\infty$ we have $\cE_0\supseteq\cE_1\supseteq\cE_2\supseteq\ldots$, i.e. new edges do not appear. Moreover, each of the graphs $\cG^k$ is connected.
%\end{assumption}

%Let us associate a graph Laplacian with each of the graphs $\cG_k$: $W^k = D^k - A^k$ ($A^k$ is the adjacency matrix of $\cG^k$ and $D^k$ is a matrix which diagonal contains the degrees of nodes in $\cG^k$). We make the following assumption on Laplacians.
%\begin{assumption}
%	For every $k = 0, 1, \ldots$ it holds $\lambda_{\min}^+\leq \lambda_{\min}^+(W^k)\leq \lambda_{\max}(W^k)\leq \lambda_{\max}$.
%\end{assumption}
Algorithm~\ref{alg:accelerated_gossip_tw} can also be interpreted as minimization of a time-varying functional with an accelerated gradient method. Consider problem
\begin{align}\label{eq:consensus_as_minimization}
	\min_{\bx\in\R^{nm}}~ h_k(\bx) = \frac{1}{2} \bx^\top \hat\mW_k \bx.
\end{align}
Algorithm~\ref{alg:accelerated_gossip_tw} is accelerated Nesterov method with step-size $\eta$ and momentum term $\beta$ applied a time-varying problem~\eqref{eq:consensus_as_minimization}. %We denote the output of Algorithm~\ref{alg:accelerated_gossip_tw} as operator $C_T(\bx)$. %Further we show that $C_T(\bx)$ is analogical to multiplying a Chebyshev polynomial of $\mW$ by $\bx$ in the time-static case.

\begin{remark}
Returning to the case of static networks, it is worth mentioning that Chebyshev polynomials and accelerated gradient methods for quadratic minimization have a strong connection, see i.e. Chapter 2 of \cite{d2021acceleration}. In fact, Polyak's momentum can be derived through application of Chebyshev polynomials to a quadratic minimization problem.
\end{remark}

The analysis of accelerated method over a uniformly non-increasing time-varying function is based on the Lyapunov function technique.
\begin{lemma}\label{lemma:potential_decrease}
	Let Assumptions~\ref{assum:connected_skeleton} and \ref{assum:connected_skeleton_bounded_spectrum} hold. Denote $\tau = 1 / (\sqrt\chi + 1)$, $\bz_{k} = 1/\tau~\bx_{k} - (1-\tau)/\tau~\by_{k}$, $\gamma = 1 / (\sqrt\chi - 1)$ and introduce potential 
	$$
	\Psi_k = (1 + \gamma)^k \cbraces{h_k(\by^k) + \frac{\lambda_{\min}^+}{2}\norm{\bz^k - \bx^*}_2^2},
	$$
	where $\bx^*$ is a solution of \eqref{eq:consensus_as_minimization}. Then $\Psi_{k+1} - \Psi_k\leq 0$.
\end{lemma}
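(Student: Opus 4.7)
The strategy is to reduce the lemma to the standard Nesterov potential-decrease inequality for a strongly convex smooth quadratic, exploiting the fact that the only time-varying ingredient, $\hat\mW_k$, is monotonically non-increasing in $k$.

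\textbf{Step 1 (monotonicity of $h_k$ and spectral control).} By construction $\hat\cG_{k+1}=(\cV,\cap_{j=0}^{k+1}\cE_j)$ is a subgraph of $\hat\cG_k$, hence $\hat\mW_{k+1}\preceq\hat\mW_k$ in the Loewner order and consequently $h_{k+1}(\bx)\le h_k(\bx)$ for every $\bx$. Combining Assumption~\ref{assum:connected_skeleton} (which yields $\hat\cG\subseteq\hat\cG_k$ and forces the kernel of $\hat\mW_k$ to be the consensus subspace) with Assumption~\ref{assum:connected_skeleton_bounded_spectrum}, one obtains $\lambda_{\min}^+\mI\preceq\hat\mW_k\preceq\lambda_{\max}\mI$ on $\cL^\top$. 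Since $\hat\mW_k\one=0$ the iterates preserve their component along $\one$, so I may take $\bx^*$ to be the consensus projection of $\bx^0$ and thereby arrange $h_k(\bx^*)=0$ for every $k$.

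\textbf{Step 2 (static one-step decrease).} For fixed $k$ I would prove the classical Nesterov estimate
\[
h_k(\by^{k+1})+\tfrac{\lambda_{\min}^+}{2}\|\bz^{k+1}-\bx^*\|_2^2 \;\le\; \tfrac{1}{1+\gamma}\Bigl[h_k(\by^k)+\tfrac{\lambda_{\min}^+}{2}\|\bz^k-\bx^*\|_2^2\Bigr].
\]
Using the identity $\bx^k=\tau\bz^k+(1-\tau)\by^k$ together with $\beta=(\sqrt\chi-1)/(\sqrt\chi+1)=1-2\tau$, the two-sequence recursion in lines~5--6 of Algorithm~\ref{alg:accelerated_gossip_tw} is equivalent to a three-sequence Nesterov-type iteration applied, with step $\eta=1/\lambda_{\max}$, to the $\lambda_{\min}^+$-strongly convex, $\lambda_{\max}$-smooth quadratic $h_k$ restricted to $\cL^\top$. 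The estimate then follows by combining the smoothness descent bound $h_k(\by^{k+1})\le h_k(\bx^k)-\tfrac{\eta}{2}\|\nabla h_k(\bx^k)\|_2^2$ with the strong-convexity lower bound $\langle\nabla h_k(\bx^k),\bx^k-\bx^*\rangle\ge h_k(\bx^k)+\tfrac{\lambda_{\min}^+}{2}\|\bx^*-\bx^k\|_2^2$ in the proportions dictated by $\tau$ and $\gamma=1/(\sqrt\chi-1)$, and expanding $\|\bz^{k+1}-\bx^*\|_2^2$ via the recursion that the algorithm induces on $\bz^{k+1}$.

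\textbf{Step 3 (assembly).} Chaining Steps~1 and~2 yields
\[
\Psi_{k+1}=(1+\gamma)^{k+1}\!\bigl[h_{k+1}(\by^{k+1})+\tfrac{\lambda_{\min}^+}{2}\|\bz^{k+1}-\bx^*\|_2^2\bigr]\le(1+\gamma)^{k+1}\!\bigl[h_k(\by^{k+1})+\tfrac{\lambda_{\min}^+}{2}\|\bz^{k+1}-\bx^*\|_2^2\bigr]\le\Psi_k,
\]
where the first inequality uses $h_{k+1}\le h_k$ and the second applies Step~2.

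The main obstacle is Step~2. The inequality itself is classical, but the bookkeeping is delicate: the parameters $\eta$, $\beta$, $\tau$, $\gamma$ are tightly coupled, and they must simultaneously satisfy $\eta=1/\lambda_{\max}$, $\beta=1-2\tau$, $\tau=1/(\sqrt\chi+1)$, $\gamma=1/(\sqrt\chi-1)$, together with the crucial algebraic identity $1/(1+\gamma)=1-1/\sqrt\chi$, in order for the cross-terms generated by weighting the smoothness and strong-convexity inequalities to cancel exactly. Steps~1 and~3 are essentially free once the subgraph containment $\hat\cG_{k+1}\subseteq\hat\cG_k$ is observed.
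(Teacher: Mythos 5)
Your proposal is correct and follows essentially the same route as the paper: the paper proves this lemma by invoking its Appendix Lemma~\ref{lem:potential_change} (the Bansal--Gupta-style potential decrease for an accelerated method over a uniformly non-increasing sequence of functions), whose proof is exactly your Step~2 combined with the monotonicity $h_{k+1}\le h_k$ of your Steps~1 and~3. Your additional observation that $h_k$ is only $\lambda_{\min}^+$-strongly convex on the subspace $\cL^\top$, which is preserved by the iteration so that one may work with the projection of $\bx^0$ as $\bx^*$, is a point the paper leaves implicit and is worth making explicit.
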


The proof of Lemma~\ref{lemma:potential_decrease} is based on standard analysis proposed in \cite{bansal2019potential} and on the observation that the objective function in \eqref{eq:consensus_as_minimization} is uniformly non-increasing, i.e. for any $\bx\in\R^{nd}$ and for any $k = 0, 1, \ldots$ we have $h_{k+1}(\bx)\leq h_k(\bx)$. Indeed, we have $\hat W^k = \sum_{(i,j)\in\hat \cE_k} (e_i - e_j) (e_i - e_j)^\top$, where $e_{i}$ denotes the $i$-th coordinate vector of $\R^m$. Therefore, it holds
\begin{align*}
W^k - W^{k+1} = \sum_{(i,j)\in \hat \cE_k\backslash \hat \cE_{k+1}} (e_i - e_j)(e_i - e_j)^\top\succeq 0.
\end{align*}
In other words, for any $\bx\in\R^{nm}$ it holds
\begin{align*}
	h_{k+1}(\bx) - h_k(\bx) = \frac{1}{2}\bx^\top\hat\mW^{k+1}\bx - \frac{1}{2}\bx^\top \hat\mW^k \bx\leq 0.
\end{align*}
The analysis of accelerated gradient method over time-varying uniformly non-increasing functions is presented in Appendix~\ref{sec:acc_method_over_tw_function}.

\section{Conclusion}\label{sec:conclusion}

In this paper, we study new classes of time-varying networks that may be more practical then the scenarios previously studied in the literature. We propose to look into a new direction of research -- slowly time-varying graphs. In the work, we formalize several regimes covering the velocity of graph changes and provide the corresponding lower bounds for each case. Our results outline the limits of what communication rates can be achieved over slowly time-varying graphs. Moreover, we propose a slightly modified consensus technique that leads to acceleration over time-varying networks with connected skeleton. Our technique may be seen as an analogue of Chebyshev acceleration that is used for time-static graphs.

%The analysis of accelerated gradient method over time-varying functions is presented in Appendix~\ref{sec:acc_method_over_tw_function}.

\bibliographystyle{icml2023}
\bibliography{references}

\newpage
\appendix
\onecolumn
{\large\textbf{Supplementary material}}
\section{Proof of the Theorem~\ref{theorem:poly}}

\begin{proof}
Denote as $B_{d,k}$ a Bethe tree of degree $d$ and depth $k$, where the root has a degree of $d$, vertices at levels from 2 to $k-1$ have a degree of $d+1$, and vertices at the $k$'th level have a degree of 1. Let $n = n(d, k)$ be a number of vertices of $B_{d, k}$. Consider a Bethe tree $B_{d, k}$. By simple calculations we get $n=\frac{d^k-1}{d-1}$. Suppose $k\ge 2$, $d\ge 3$, thus using Theorem~2 and Theorem~3  from \cite{tight_bethe} and considering the asymptotic behavior, we obtain that $\exists d_0:\forall d\ge d_0$ 
\begin{equation*}
\frac{(d-1)^2}{d^k-1} \le \lambda_{n-1}(L(B_{d,k})) \le 2\frac{(d-1)^2}{d^k-1}.
\end{equation*}

\begin{figure}[ht]
    \centering
    \includegraphics[width=0.5\textwidth]{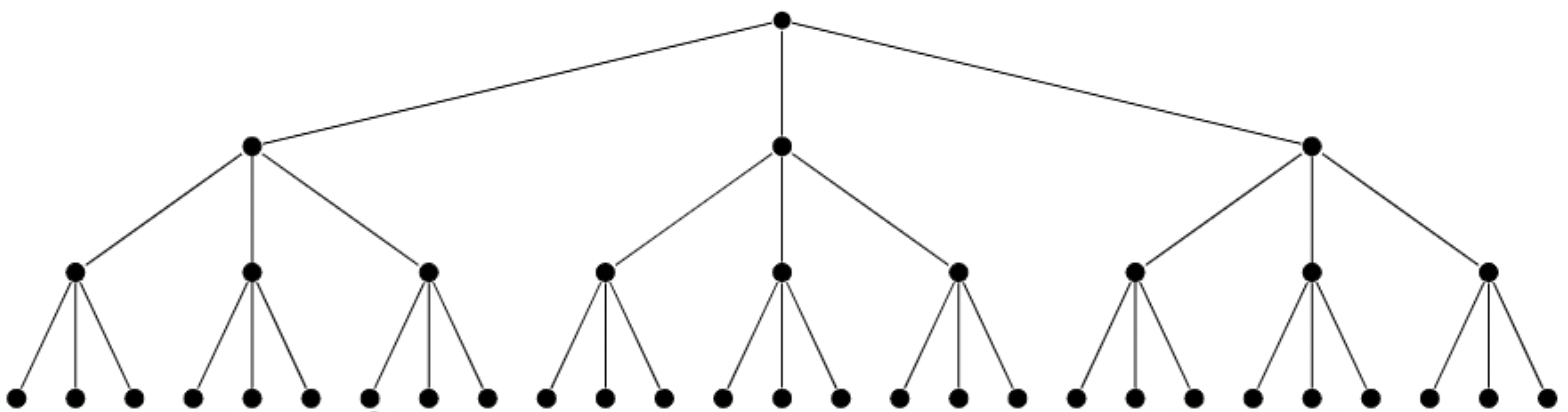}
    \caption{Example of $B_{3, 4}$}
    \label{fig:bethe_tree_example}
\end{figure}

Using results from \cite{stevanovic_spectr_gap} we get $d+2 \le \lambda_1(L(B_{d,k})) \le (\sqrt{d}+1)^2$, thus we conclude that $\exists d_1:\forall d \ge d_1, k \ge 2$

\begin{equation}
 \frac{n(B_{d,k})}{2}\le\chi(B_{d,k})\le2n(B_{d,k}).\label{chi_bounds}
\end{equation}

Denote as $\cV_1$ the set of vertices of type 1, $\cV_2$ the set of vertices of type 2 ($\cV_1\cap \cV_2 = \varnothing$), and $\cW$ the set of remaining vertices. Let $d\ge t > 2$. $\cV_1$ consists of $[\frac{d}{t}]$ subtrees with roots adjacent to the root of $B_{d, k}$, and $\cV_2$ is defined in the same way. Therefore $|\cV_1|=|\cV_2|=[\frac{d}{t}]\frac{d^{k-1}-1}{d-1}$.

\begin{figure}[ht]
    \centering
    \includegraphics[width=0.35\textwidth]{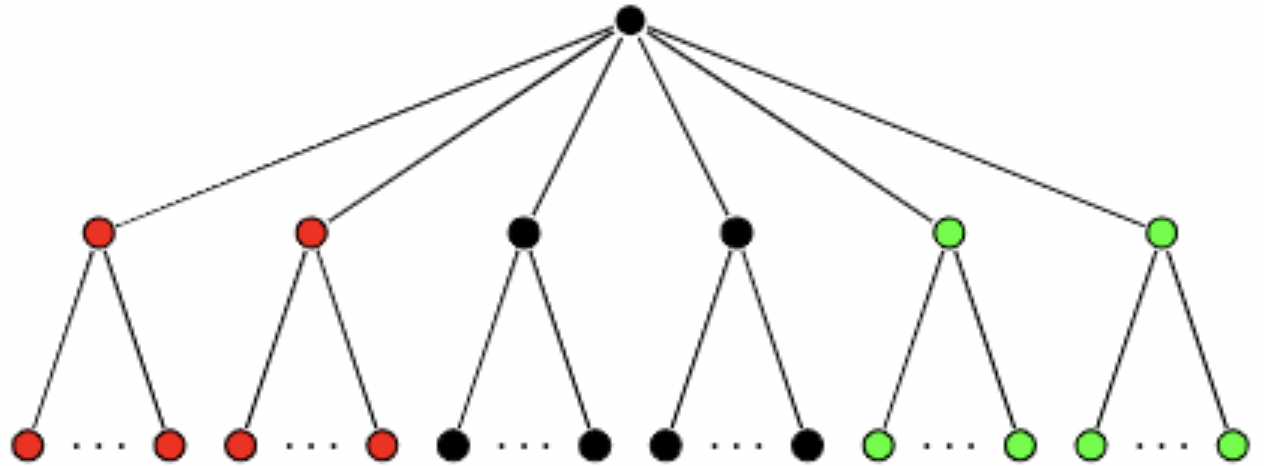}
    \caption{Example of splitting a graph into three sets. The vertices of $\cV_1$ ("Bad" vertices) are indicated in red. The vertices of $\cV_2$ ("Good" vertices) are indicated in green.}
    \label{fig:graph_partition_example}
\end{figure}

Denote the vertex functions $f_v: \ell_2\rightarrow \R$ depending on vertex type:
\begin{equation}\label{poly_func}
f_v(x)=\begin{cases}
 \frac{\mu}{2n}\norm{x}^2+\frac{L-\mu}{4|\cV_1|}\[(x_1-1)^2+\sum_{k=1}^{\infty}(x_{2k}-x_{2k+1})^2\], & v\in V_1 \\
 \frac{\mu}{2n}\norm{x}^2+\frac{L-\mu}{4|\cV_2|}\sum_{k=1}^{\infty}(x_{2k-1}-x_{2k})^2, &  v\in V_2\\
 \frac{\mu}{2n}\norm{x}^2, & v\in W
\end{cases}.
\end{equation}

Estimate $\cV_1$ and $\cV_2$ through $n$, using that $d \ge t$, $k\ge 2,d\ge 3$ we get
\begin{equation}
 |\cV_1|=|\cV_2|\ge \frac{d}{2t}\frac{d^{k-1}-1}{d-1}\ge \frac{n}{4t}. \label{werwe}
\end{equation}

Estimate the network's global characteristic number using the local one
\begin{equation*}
 \kappa_l = \frac{\frac{L-\mu}{2|\cV_1|}+\frac{\mu}{n}}{\frac{\mu}{n}} \le \frac{\frac{2t(L-\mu)}{n}+\frac{\mu}{n}}{\frac{\mu}{n}}=\frac{2t(L-\mu)+\mu}{\mu}=2(\kappa_g-1)t+1,
\end{equation*}
 thus we have
\begin{equation}\label{kappa_rels}
 \kappa_g \ge \frac{\kappa_l - 1}{2t}+1.
\end{equation}

Let us now start describing the sequence of edges $\{\cE_i\}_{i=1}^{\infty}$. Firstly, we introduce the $\swap(v_1, v_2)$ operation. When applied to the graph $\cG=(\cV, \cE)$, it changes the edges between $v_1$ and $v_2$. Note that this operation changes no more than $2(\deg(v_1) + \deg(v_2))$ edges ($\Delta \le 2(\deg(v_1) + \deg(v_2))$).

Next, introduce the following scheme: A graph consists of "good" and "bad" vertices. At each iteration, each good vertex adjacent to at least one bad vertex becomes a bad vertex. After that, we somehow change the edges in the graph, and the scheme continues. In our case, we call $\cV_1$ vertices "bad" and $\cG \setminus \cV_1$ vertices "good". Our goal is to make the $\cV_2$ vertices remain "good" as long as possible. The algorithm returns a graph sequence of decentralized problems $\{\DP_i\}_{i=1}^{\infty}$. The individual steps of the algorithm work as follows:

\begin{algorithm}[H]\caption{Inner loop of graph changing scheme with polynomial restrictions}\label{alg_chang_schm}
\begin{algorithmic}
	\STATE \textbf{Input:} {$\cG$, $\cV_1$, $\cV_2$, $i$}

	\STATE $B = \cV_1$

	\WHILE{$\NoBadVertices(\cV_2, B)$}
	\STATE $U = \PotentialBadVertices(B)$

	\FOR{$j=0,1,\dots,|U| - 1$}
		\STATE $u=U[j]$
		\STATE $v = \FindCandidate(u)$
		\STATE $B = B\cup \{v\}$
		\STATE $\cG = \swap(\cG, u, v)$
	\ENDFOR
	\STATE $\cG_i = \cG$
	\STATE $i=i+1$

	\ENDWHILE
\end{algorithmic}
\end{algorithm}

We make a few assumptions:
\begin{itemize}
\item Each level of the Bethe tree has a natural order of vertices, particularly each vertex has an ordered set of its children. The root children in $\cV_1$ are the most minimal, while the root children in $\cV_2$ are the most maximal.
\item $B$ is the set of "bad" vertices. At moment $i$, $\cG_i$ is a graph in the sequence $\{\DP\}_{i=1}^{\infty}$.
\item $i$ is the state number of the $\{\DP\}_{i=1}^{\infty}$ sequence on which we run Algorithm~\ref{alg_chang_schm}.
%\item Formally, $\swap$ operation swaps the edges of 2 vertices, but not these vertices.
\end{itemize}
Each iteration works as follows:
\begin{itemize}
\item $\NoBadVertices$ checks if the set consists only of "good" vertices.
\item $\PotentialBadVertices$ computes the set of "good" vertices which would become "bad" ones after an iteration.
\item $\FindCandidate$ finds a vertex, which would be swapped with the input vertex. The step works as follows: we start with root vertex and move to the leftmost (lowest numbered) "good" vertex (not in $B$), if there is no such vertex, we stop. Return the vertex where we stopped.
\item $\swap$ operation swaps the edges of two input vertices, but not the vertices themselves.
\end{itemize}

Note that $\cV_1$ is the set of "bad" vertices, which can be obtained through several While iterations of Algorithm~\ref{alg_chang_schm}, starting from a graph with no "bad" vertices.

\begin{figure}[ht]
    \centering
    \includegraphics[width=0.5\textwidth]{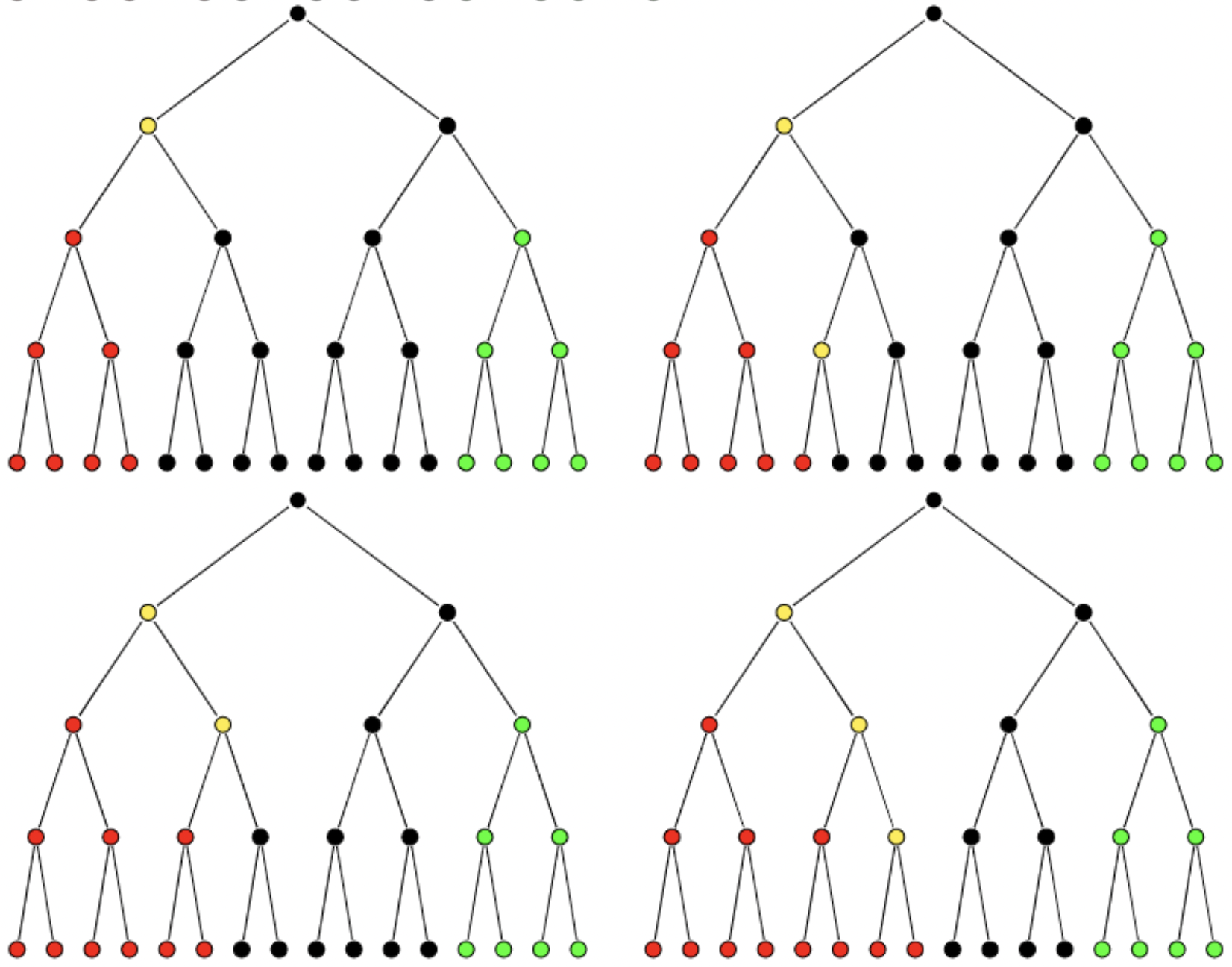}
    \caption{Example of a graph change scheme. Potentially "bad" vertices, i.e. those that will become "bad" in one move, are indicated in yellow.}
    \label{fig:changing_scheme_example}
\end{figure}

\begin{lemma}\label{change_depth}
 Consider an Algorithm~\ref{alg_chang_schm}, applied to $B_{d, k}$. At any moment of this algorithm, the number of new bad vertices is bounded as follows:
 \begin{equation}
     |U| \le k - 1.
 \end{equation}
\end{lemma}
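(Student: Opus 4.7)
The plan is to prove the bound by induction on the iteration index of the outer \texttt{while} loop, establishing the stronger invariant that at the start of each such iteration the set $U$ of potential bad vertices is contained in a single root-to-leaf path of the original Bethe tree $B_{d,k}$. Since any root-to-leaf path in $B_{d,k}$ contains exactly $k$ vertices and the root is always a good vertex lying outside $U$ by construction after the first step, the cardinality bound $|U|\le k-1$ follows.

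First I would settle the base case. Initially $B=\cV_1$ and, by the definition of $\cV_1$, the bad set is exactly a disjoint union of $\lfloor d/t\rfloor$ complete subtrees hanging off root-children of $B_{d,k}$. In the original graph the only vertex outside $\cV_1$ that has a neighbour in $\cV_1$ is the root of $B_{d,k}$, so $U=\{\text{root}\}$ and $|U|=1\le k-1$. This also trivially places $U$ inside a root-to-leaf path.

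Next I would analyse the inductive step by tracking what a single swap does to the good/bad boundary. For any $u\in U$, \textsc{FindCandidate}$(u)$ descends from the root along leftmost good, non-bad children until it can descend no further, so the returned $v$ is a vertex whose neighbours (aside from its parent in the traversal) are all already bad; morphologically $v$ behaves like a leaf with respect to the current good/bad partition. After $\swap(\cG,u,v)$, the bad vertex $v$ inherits the edges of $u$ while $u$ inherits the sparse edges of $v$. Consequently, the collection of \emph{good--bad} edges incident to $B\cup\{v\}$ is, up to the relabelling that identifies $u$ with $v$, the same as the collection of good--bad edges that were incident to $B\cup\{u\}$ before the swap, with the outgoing edges from $u$ now rooted at the vertex $v$ lying one level deeper on the leftmost good path. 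Processing the elements of $U$ in order therefore moves each frontier vertex one step deeper along the leftmost good path, producing a new $U$ for the next while-iteration that is again contained in a root-to-leaf path of $B_{d,k}$.

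The main obstacle will be formalising this last bookkeeping step: one must handle the fact that $U$ is frozen at the start of a while-iteration while $B$ grows inside the inner \texttt{for} loop, so each successive \textsc{FindCandidate} call is executed in a graph whose edges have already been rewired by earlier swaps of the same iteration. The clean way to manage this is to maintain an explicit bijection between the elements of $U$ and the vertices of a single root-to-leaf path, and to verify that this bijection is preserved by every individual \textsc{swap}; combined with the base case and the observation that the path has at most $k$ vertices, this yields $|U|\le k-1$ at every moment of the algorithm.
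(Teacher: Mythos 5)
Your target invariant---that $U$ always lies on a single root-to-leaf path---is essentially the right structural fact: it is slightly stronger than, and implies, what the paper actually proves (no leaf is ever in $U$, and at most one vertex of $U$ per level, whence $|U|\le k-1$). The gap is in your inductive step, where you assert that processing $U$ merely ``moves each frontier vertex one step deeper along the leftmost good path.'' As written this does not rule out the frontier \emph{branching}. Concretely: after $\swap(\cG,u,v)$ the newly bad vertex $v$ inherits $u$'s old slot, and the good--bad boundary created there consists of \emph{all} good neighbours of that slot---its parent \emph{and} every good child. If a vertex of high degree (e.g.\ the root, which retains $d-\lfloor d/t\rfloor$ good children after the first iteration, and $d$ may be far larger than $k$) could become bad while some of its children are still good, $U$ would immediately acquire up to $d\gg k-1$ elements. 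The lemma therefore hinges on showing that a vertex only ever turns bad when \emph{all of its children are already bad}, i.e.\ that the bad set is at every moment a union of complete subtrees (descendant-closed) filled in left-to-right order. That is precisely the invariant the paper's proof states and exploits: descendant-closure disposes of the last level (a good leaf with a bad parent would violate it), and combined with the fact that $\FindCandidate$ completely fills the leftmost partially-infected subtree before touching the next one, it forbids two incomparable frontier vertices (consider their lowest common ancestor). Your write-up never states this invariant, and without it the ``bijection between $U$ and a root-to-leaf path'' cannot be verified.

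A secondary inaccuracy: the frontier does not translate rigidly one level down per iteration; it is the set of proper ancestors (with a bad child) of the most recently infected vertex, so its cardinality oscillates between $1$ and $k-1$, collapsing each time a whole subtree is completed and its root becomes bad. Likewise, closing the count by excluding the root (``always a good vertex lying outside $U$'') is shaky---after the first iteration the root's slot is occupied by a bad label---whereas the paper excludes the leaf level instead. Both issues are repairable once the descendant-closure invariant is established, but as it stands the key mechanism of the proof is missing from your argument.
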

\begin{proof}
 Note that such iterations support the following invariant: if vertex $p$ is "bad", then all its ancestors are "bad" too (those that can be reached from $p$ by going from the root). Consider the set of new "bad" vertices. On the last level, there are no such vertices, as it would be a contradiction with the first sentence. At other levels, there are at most two. Suppose the contradiction, let them be vertices $v$ and $u$. These vertices are adjacent to "bad" vertices $v'$ and $u'$ at the level below. Consider their common ancestor $p$. It is "good" and has two "good" children $v$ and $u$, which have "bad" vertices $v'$ and $u'$ in their subtrees. But then the algorithm would have to "fill" one of the subtrees on $v$ or $u$ (the one with the lowest number) with "bad" vertices, a contradiction.
\end{proof}
As a result, at each iteration, we get the upper bound on the change of edges:
\begin{equation}\label{row_change_upper}
 \Delta_i \le 4|U|d_{max}=4(k-1)(d+1).
\end{equation}
Using that $n=\frac{d^k-1}{d-1}$ and assuming $d\ge k$ we get
\begin{equation}
 \Delta_i \le 4(k-1)(d+1)\le 4kd \le 4kn^{1/(k-1)}.\label{change_upper}
\end{equation}

Let $T$ be the number of iterations needed for one of the vertices in $\cV_2$ to become "bad" (it equals the number of iterations of Algorithm~\ref{alg_chang_schm}), we will refer to this value as the "information flow". At each iteration, there are at most $k-1$ new "bad" vertices. Therefore, we get:
\begin{equation}\label{information_flow_T}
 T\ge \[\frac{|\cW|}{k - 1}\]+1\ge\frac{|\cW|}{k - 1}.
\end{equation}

Estimate the size of the neutral vertex set $\cW$ using $n=\frac{d^k-1}{d-1}$ and the definitions of $\cV_1, \cV_2$
\begin{equation*}
 |\cW|=n-|\cV_1|-|\cV_2| \ge \frac{1}{d-1}\(d^k-1-2\frac{d^k-d}{t}\) \ge \(1-\frac{2}{t}\)n.
\end{equation*}

Therefore using \ref{chi_bounds} we get lower bound on the information flow:
\begin{equation}\label{flow_lower}
 T\ge \frac{1-2/t}{2(k-1)}\chi.
\end{equation}

Now, we are going to define the whole sequence of graphs in $\DP_j$ for every $j\in\N$. Algorithm~\ref{alg_chang_schm} only defines the graphs for $i=i_0,i_0+1,\dots,i_0+T-1$, where $i_0$ is the input state number for the Inner algorithm. We run Algorithm~\ref{alg_chang_schm} iteratively. After each iteration, at least one vertex of $\cV_2$ becomes "bad", then we rearrange the sets $\cV_1$ and $\cV_2$, reverse the order of children for each vertex in $\cG$, and run the algorithm again.

\begin{algorithm}[H]\caption{Outer loop of graph changing scheme with polynomial restrictions}\label{alg_outer_chang_schm}
\begin{algorithmic}
	\STATE \textbf{Input:} {$\cG$, $\cV_1$, $\cV_2$}
	\STATE $i=1$
	\WHILE{True}
		\STATE $\InnerLoop(\cG, \cV_1, \cV_2, i)$
		\STATE $\Rearrange(\cV_1, \cV_2)$
		\STATE $\ReverseOrders(\cG)$
	\ENDWHILE
\end{algorithmic}
\end{algorithm}

$\InnerLoop$ is Algorithm~\ref{alg_chang_schm}. $\Rearrange$ changes the pointers for variables $\cV_1$ and $\cV_2$. $\ReverseOrders$ reverses the order of children for each vertex.

We have found a sequence of graphs in which information flows slowly. Specifically, to get from $\cV_1$ to $\cV_2$, it takes $T$ iterations. To get back (from $\cV_2$ to $\cV_1$) it takes $T$ iterations as well and so on.

Let $x_0=0$ be the initial point for the first-order decentralized algorithm. For every $m\ge 1$, we define $l_m=\min{p\ge 1|\exists v: \exists x\in \cH_{v}(p): x_m \neq 0}$ as the first moment when we can get a non-zero element at the $m$-th place at any node.

Considering the types of functions on vertices of the graph \ref{poly_func}, we can conclude that functions on vertices from $\cV_1$ can "transfer" (by calculating the gradient) information (non-zero element) from the even positions ($2, 4, 6,\ldots$) to the next ones, and functions on vertices from $\cV_2$ can transfer information from the odd positions ($1, 3, 5,\ldots$) to the next ones. Therefore, for the network to get a new non-zero element at the next position, a complete iteration of Algorithm~\ref{alg_chang_schm} is required, that is $T$ communication iterations.

One of the main ideas is that this "information" cannot spread faster than "bad" vertices. 

To reach the $m$-th non-zero element, we need to make at least $m$ local steps and $(m - 1)T$ communication steps to transfer information from gradients between $\cV_1$ and $\cV_2$ sets. Therefore, we can estimate $l_m$:
\begin{equation}\label{non_zero_inf}
 l_m \ge (m-1)T+m.
\end{equation}

The solution of the global optimization problem is $x^*_p = \(\frac{\sqrt{\kappa_g} - 1}{\sqrt{\kappa_g} +1}\)^p$.

For any $m, p$ such that $l_m > p$
\begin{equation*}
 \norm{x_p-x_*}^2 \ge (x_*)_m^2+(x_*)_{m+1}^2+\ldots=\(\frac{\sqrt{\kappa_g} - 1}{\sqrt{\kappa_g} +1}\)^m\norm{x_0-x_*}^2.
\end{equation*}

Using \ref{non_zero_inf} we can take $m=\ceil{\frac{p}{T+1}}+1$. From \ref{kappa_rels} we conclude that $\frac{\sqrt{\kappa_g} - 1}{\sqrt{\kappa_g} +1} \ge 1 - \frac{2\sqrt{6}}{\sqrt{\kappa_l}}$

Therefore using \ref{kappa_rels}, \ref{flow_lower} and assign $t=3$ we get
\begin{equation*}
 \norm{x_p-x_*}^2 \ge \(\frac{\sqrt{\kappa_g} - 1}{\sqrt{\kappa_g} +1}\)^{\ceil{p/\(\frac{1}{6(k-1)}\chi+1\)}+1}\norm{x_0-x_*}^2.
\end{equation*}

Rearranging it, we get
\begin{equation}\label{arg_lower}
 \norm{x_p-x_*}^2 \ge \(\max\left\{1-2\sqrt{6}\sqrt{\frac{\mu}{L}}\right\}\)^{\frac{6(k-1)p}{\chi}+2}\norm{x_0-x_*}^2.
\end{equation}

\end{proof}

\section{Proof of the Theorem~\ref{theorem:log}}

\begin{proof}
The proof is very similar to the proof of the Theorem~\ref{theorem:poly}, but here we fix $d=2$ and $k\to \infty$.

Let $B_k$ be a binary tree $B_{2, k}$. Using the lower and upper bounds on algebraic connectivity ($\lambda_{n-1}$) of such trees from \cite{binary_tree_alg_connect} and following the same logic as in \ref{chi_bounds}, we can conclude that there exists a $k_0$ such that for all $k>k_0$ the following is true
\begin{equation}
 2n(B_k)\le\chi(B_k)\le 6n(B_k).\label{chi_bounds_bin_tree}
\end{equation}
Denote the set of vertices of type 1 as $\cV_1$, the set of vertices of type 2 as $\cV_2$ ($\cV_1\cap \cV_2 =\varnothing$), and the set of remaining vertices as $\cW$. Suppose that every non-leaf vertex has a "left" and "right" child. Let $\cV_1$ be a subtree with a "left-left" root vertex (it can be reached from the graph's root by going to the left child and then back to the left child). $\cV_2$ is defined in the same way. Therefore $|\cV_1|=|\cV_2|=2^{k-2}-1$.

Denote the vertex functions $f_v: \ell_2\rightarrow \R$ similarly as in \ref{poly_func}.

Estimate $\cV_1$ and $\cV_2$ through $n$, using that $k\ge 4$ we get
\begin{equation}
 \frac{n}{5}\le |\cV_1|=|\cV_2|\le\frac{n}{4}.
\end{equation}

Estimate global characteristic number of the network through local one
\begin{equation*}
 \kappa_l = \frac{\frac{L-\mu}{2|\cV_1|}+\frac{\mu}{n}}{\frac{\mu}{n}} \le \frac{\frac{5(L-\mu)}{2n}+\frac{\mu}{n}}{\frac{\mu}{n}}=\frac{5(L-\mu)+\mu}{2\mu}=\frac{5}{2}(\kappa_g-1)+1,
\end{equation*}
 thus we have
\begin{equation}\label{kappa_rels_log}
 \kappa_g \ge \frac{2}{5}(\kappa_l-1)+1.
\end{equation}

Next, we will use exactly the same technique to construct a sequence of communication graphs as in the proof of the Theorem~\ref{theorem:poly} (Algorithm~\ref{alg_chang_schm} and Algorithm~\ref{alg_outer_chang_schm}). As a result, we get something resembling \ref{information_flow_T} inequality on "information flow" defined in previous proof
\begin{equation}
 T\ge\frac{|\cW|}{k - 1}.
\end{equation}
Using $n=2^k-1$ and the definitions of $\cV_1, \cV_2$, we can estimate the size of the neutral vertex set $\cW$. 
\begin{equation*}
 |\cW|=n-|\cV_1|-|\cV_2| = 2^k-1-2(2^{k-2}-1) \ge \frac{n}{2}.
\end{equation*}
By using \ref{chi_bounds_bin_tree} and inequality $k-1\le \log_2 n$ we derive a lower bound on $T$:
\begin{equation}\label{flow_lower_log}
 T\ge\frac{\chi}{12(k-1)}\ge\frac{\chi}{12log_2(\chi/2)}.
\end{equation}
Also we similarly take an upper bound on edge change $\Delta_i$ in graph sequence
\begin{equation}\label{change_upper_log}
 \Delta_i \le 4|U|d_{max}=12(k-1)\le 12\log_2 n .
\end{equation}

Do the same reasoning with $l_m$ (defined in the last section).

In order to reach the $m$-th non-zero element, at least $m$ local steps and $(m - 1)T$ communication steps are required to transfer information from gradients between $\cV_1$ and $\cV_2$ sets. Based on this, we can estimate $l_m$:
\begin{equation}\label{non_zero_inf_log}
 l_m \ge (m-1)T+m.
\end{equation}

The solution of the global optimization problem is $x^*_p = \(\frac{\sqrt{\kappa_g} - 1}{\sqrt{\kappa_g} +1}\)^p$.

For any $m, p$ such that $l_m > p$
\begin{equation*}
 \norm{x_p-x_*}^2 \ge (x_*)_m^2+(x_*)_{m+1}^2+\ldots=\(\frac{\sqrt{\kappa_g} - 1}{\sqrt{\kappa_g} +1}\)^m\norm{x_0-x_*}^2.
\end{equation*}

Using \ref{non_zero_inf_log} we can take $m=\ceil{\frac{p}{T+1}}+1$. From \ref{kappa_rels_log} we conclude that $\frac{\sqrt{\kappa_g} - 1}{\sqrt{\kappa_g} +1} \ge 1 - \frac{\sqrt{10}}{\sqrt{\kappa_l}}$

Therefore using \ref{kappa_rels_log}, \ref{flow_lower_log} we get
\begin{equation*}
 \norm{x_p-x_*}^2 \ge \(\max\left\{0,1 - \sqrt{10}\sqrt{\frac{\mu}{L}}\right\}\)^{\ceil{p/\(\frac{\chi}{12\log_2(\chi/2)}+1\)}+1}\norm{x_0-x_*}^2.
\end{equation*}

Rearranging it, we get
\begin{equation}\label{arg_lower_log}
 \norm{x_p-x_*}^2 \ge \(\max\left\{0,1 - \sqrt{10}\sqrt{\frac{\mu}{L}}\right\}\)^{\frac{12\log_2(\chi/2)p}{\chi}+2}\norm{x_0-x_*}^2.
\end{equation}
\end{proof}

\section{Proof of the Theorem~\ref{theorem:const}}
Firstly, let's define the structure of the graph that will serve as a counter-example in the case under consideration and study its properties.

We will define the graph $H_{d, k}$ through induction. Let $H_{1, k}$ be a path of length $k$, and call any of its leaf vertices the root. Then, assuming we have defined $H_{d, k}$ for all $k$, we define $H_{d+1, k}$ as follows: take a path of length $k$ (and call the leaf vertex of the path the root of the graph), and attach a copy of $H_{d, k}$ to the root of each vertex in the path.

It can be seen that each tree $H_{d, k}$ consists of paths of length $k$ with fixed vertices, which we will refer to as roots. Consider one such path, with a start vertex and an end vertex, where the end vertex is a root and the start vertex is the other leaf vertex. We will also assign a number from $1$ to $k$ to each vertex in the path, corresponding to the distance from the start vertex, increased by $1$. That is, the start and root vertices have numbers $1$ and $k$ respectively.

We will divide the tree $H_{d, k}$ into levels. As the graph was constructed through induction, the first level will consist of the vertices added in the first iteration of the induction, the second level will consist of the vertices added in the second iteration of induction, and so on, up to level $d$.

To move forward, let's assign coordinates to these vertices as follows: consider the vertex $v$ at level $g$, then its coordinates will be the tuple $x = (x_1, \ldots, x_g)$, where $x_i$ is the number corresponding to the vertex closest to $v$ at level $i$.

To proceed, let's introduce a linear order relation on these vertices as follows: if they have different lengths and the coordinates of the first vertex is a prefix of the second one, then the second one is considered smaller. Otherwise, the one with the smallest element that has the first difference from left to right is considered smaller. This way, the vertex $(x_1,\ldots,x_g)$ will be adjacent to the vertices $(x_1,\ldots,x_g - 1)$, $(x_1,\ldots,x_g + 1)$, $(x_1,\ldots,x_g, d)$ if notations is correct and to vertex $(x_1,\ldots,x_{g - 1})$ if $x_g=d$."

Let us introduce a linear order relation on these vertices: if the coordinate of the first vertex is the prefix of the second one, then the second one is smaller, otherwise the one with the smallest element, which has the first difference from left to right, is smaller. Then the vertex $(x_1,\ldots,x_g)$ will be adjacent to the vertices $(x_1,\ldots,x_g - 1)$, $(x_1,\ldots,x_g + 1)$, $(x_1,\ldots,x_g, d)$ if notations is correct and to vertex $(x_1,\ldots,x_{g - 1})$ if $x_g=d$.

Consider a graph $H_{d, k}$.

For this graph $n=k+k^2+\ldots+k^d$. Let $D$ be the diameter of this graph, it can be easily seen that $D=(2d-1)k$. According to Theorem~4.1.1 in \cite{laplacian_thesis}, we can obtain an estimation of $\lambda_{n-1}(L(H_{d, k}))$
\begin{equation}\label{lambda_min_lower:const}
    \lambda_{n-1}(L(H_{d, k})) \ge \frac{1}{nD}\ge\frac{1}{d(2d-1)k^{d+1}}.
\end{equation}

Using results from \cite{stevanovic_spectr_gap} we get
\begin{equation}\label{lambda_max_upper:const}
    4 \le \lambda_1(L(H_{d,k})) \le 3+2\sqrt{2}\le 6.
\end{equation}

As a result, by using \ref{lambda_min_lower:const} and \ref{lambda_max_upper:const} we can obtain an upper bound for $\chi(H_{d, k})$.
\begin{equation}\label{chi_upper:const}
    \chi(H_{d, k}) \le 6d(2d-1)k^{d+1} \le 6d(2d-1)n^{\frac{d+1}{d}}.
\end{equation}

Let $\cV_1$ and $\cV_2$ be disjoint sets of vertices of type 1 and type 2, respectively, and let $\cW$ be the set of remaining vertices. $\cV_1$ consists of vertices that have a coordinate of at most $(\[\frac{k}{3}\])$, and $\cV_2$ consists of vertices that have a coordinate of at least $(k-\[\frac{k}{3}\])$. Therefore, $|\cV_1|=|\cV_2|=\[\frac{k}{3}\](k+k^2+\ldots+k^{d-1})$.

Denote the vertex functions $f_v: \ell_2\rightarrow \R$ depending on vertex type:
\begin{equation}\label{poly_func}
f_v(x)=\begin{cases}
 \frac{\mu}{2n}\norm{x}^2+\frac{L-\mu}{4|\cV_1|}\[(x_1-1)^2+\sum_{k=1}^{\infty}(x_{2k}-x_{2k+1})^2\], & v\in V_1 \\
 \frac{\mu}{2n}\norm{x}^2+\frac{L-\mu}{4|\cV_2|}\sum_{k=1}^{\infty}(x_{2k-1}-x_{2k})^2, &  v\in V_2\\
 \frac{\mu}{2n}\norm{x}^2, & v\in W
\end{cases}.
\end{equation}

Estimate $\cV_1$ and $\cV_2$ through $n$, let $k\ge 3$
\begin{equation}
 |\cV_1|=|\cV_2|\ge \frac{k}{6}(k+k^2+\ldots+k^{d-1})=\frac{n}{12}. \label{werwe:const}
\end{equation}

Estimate the network's global characteristic number using the local one
\begin{equation*}
 \kappa_l = \frac{\frac{L-\mu}{2|\cV_1|}+\frac{\mu}{n}}{\frac{\mu}{n}} \le \frac{\frac{6(L-\mu)}{n}+\frac{\mu}{n}}{\frac{\mu}{n}}=\frac{6(L-\mu)+\mu}{\mu}=6(\kappa_g-1)+1,
\end{equation*}
 thus we have
\begin{equation}\label{kappa_rels_const}
 \kappa_g \ge \frac{\kappa_l - 1}{6}+1.
\end{equation}

Let us now describe the sequence of edges $\{\cE_i\}_{i=1}^{\infty}$. Similar to the proof of Theorem~\ref{theorem:poly}, we will construct an algorithm that generates a sequence of graphs that works under the same conditions as Algorithm~\ref{alg_chang_schm} and Algorithm~\ref{alg_outer_chang_schm}. In this scheme, we will refer to vertices in $\cV_1$ as "bad" and the remaining vertices as "good". After each iteration, a "good" vertex that is adjacent to a "bad" vertex becomes "bad", and the graph is modified in some way. The goal is to keep the vertices in $\cV_2$ "good" for as long as possible. Additionally, we will maintain the invariant that after each graph change, a "good" vertex cannot be less than a "bad" vertex.

\begin{algorithm}[H]\caption{Inner loop of graph changing scheme with constant restrictions}\label{alg_chang_schm:const}
\begin{algorithmic}
	\STATE \textbf{Input:} {$\cG$, $\cV_1$, $\cV_2$, $i$}

	\STATE $B = \cV_1$

	\WHILE{$\NoBadVertices(\cV_2, B)$}
	\STATE $U = \PotentialBadVertices(B)$
        \STATE $\AtLastLevel(U, B)$

	\FOR{$j=0,1,\dots,|U| - 1$}
		\STATE $u=U[j]$
		\STATE $v = \FindCandidate(u, B)$
		\STATE $B = B\cup \{v\}$
		\STATE $\cG = \swap(\cG, u, v)$
	\ENDFOR
	\STATE $\cG_i = \cG$
	\STATE $i=i+1$

	\ENDWHILE
\end{algorithmic}
\end{algorithm}

Every function works in the same way as in the Algorithm~\ref{alg_chang_schm}, except for $\FindCandidate$ and $\AtLastLevel$. $\FindCandidate$ finds a vertex that would be swapped with the input vertex. It finds the smallest "good" vertex in the graph. It is simple to check that the invariant is preserved. $\AtLastLevel$ checks if there is a vertex at the last level, makes it "bad", and removes it from $U$.

Then we apply Algorithm~\ref{alg_outer_chang_schm}, but using Algorithm~\ref{alg_chang_schm:const} as the inner algorithm, thus obtaining a sequence of graphs.

\begin{lemma}\label{change_depth:const}
 Consider an Algorithm~\ref{alg_chang_schm:const}, applied to $H_{d, k}$. At any moment of this algorithm, the number of new bad vertices is bounded as follows:
 \begin{equation}
     |U| \le d.
 \end{equation}
\end{lemma}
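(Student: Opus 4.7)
The plan is to use the algorithm's maintained invariant — no good vertex is strictly less than any bad vertex in the linear order on coordinates — to deduce that the set $B$ of bad vertex-coordinates forms an initial segment of that order. Let $M = (m_1, \ldots, m_h)$ be its maximum element, at some level $h \le d$. The $\swap$ operation only permutes which label occupies which tree-position, so the underlying graph stays isomorphic to $H_{d,k}$ and adjacency in the current graph is identical to $H_{d,k}$-tree-adjacency between coordinates.

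To bound $|U|$, I would classify every good coordinate $u = (u_1, \ldots, u_g)$ adjacent to $B$ by the type of incident tree-edge in $H_{d,k}$: the parent $(u_1, \ldots, u_{g-1})$ when $u_g = k$, the child $(u_1, \ldots, u_g, k)$ when $g < d$, or the same-path siblings $(u_1, \ldots, u_g \pm 1)$. Using the two ordering rules — extending a coordinate prefix strictly decreases the order, and otherwise comparison is lexicographic with smaller-is-smaller — one verifies: (i) a good $u$ never has a bad parent, since parents are always order-greater than their children; (ii) a good $u$ has a bad child only if that child equals $M$ itself, which forces $u = (m_1, \ldots, m_{h-1})$ with $m_h = k$; (iii) a good $u$ has a bad left-sibling $(u_1, \ldots, u_g - 1)$ only if $u$ is a proper prefix of $M$, giving $u = (m_1, \ldots, m_g)$ for some $1 \le g \le h - 1$, or $u = (m_1, \ldots, m_{h-1}, m_h + 1)$, the right-sibling of $M$ on the same path.

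Collecting the candidates from this case analysis, $U$ is contained in
\begin{equation*}
\{(m_1, \ldots, m_g) : 1 \le g \le h - 1\} \cup \{(m_1, \ldots, m_{h-1}, m_h + 1)\},
\end{equation*}
which has at most $h \le d$ elements. The main technical step is the level-by-level verification that no good vertex at a level $g > h$ can appear in $U$: for any such $u$, the first coordinate where $u$ and $M$ disagree is some position $i \le h < g$ with $u_i > m_i$, and every tree-neighbor of $u$ (sibling, child, or parent) inherits this same disagreement at position $i$, so all neighbors are order-greater than $M$ and therefore good. Once this structural observation is in place, the counting above yields $|U| \le d$ immediately.
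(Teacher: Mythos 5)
Your proof is correct and in fact establishes something slightly stronger than the lemma: not just $|U|\le d$ but $|U|\le h$, where $h$ is the level of the maximal bad vertex $M$, together with an explicit list of the positions that can lie in $U$. The route is genuinely different from the paper's. The paper argues by contradiction that $U$ contains at most one vertex per level: for two same-level candidates $v<u$, the only neighbours of $u$ that can be bad are its left sibling and the root of its attached path, and either choice yields a bad vertex that is $\ge v$, contradicting the invariant that every bad vertex precedes every good one; with $d$ levels this gives $|U|\le d$. You instead exploit the invariant globally, noting that it makes $B$ an initial segment of the linear order, and then characterize $U$ directly as a subset of the proper prefixes of $M=\max B$ together with the right sibling of $M$ on its own path. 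Your version buys more: it recovers the paper's ``at most one per level'' claim as a corollary (the listed candidates sit at pairwise distinct levels), it makes transparent the remark following the lemma that a last-level element of $U$ is unique and identifiable, and your level-by-level verification that no vertex at level $g>h$ can enter $U$ is exactly the kind of explicit bookkeeping the paper leaves implicit. The paper's contradiction argument is shorter but yields only the count. Both proofs ultimately rest on the same two ingredients --- the maintained order invariant and the compatibility of the tree adjacency of $H_{d,k}$ with the prefix-then-lexicographic order on coordinates --- so no gap to report; only note that the paper's adjacency list writes $(x_1,\dots,x_g,d)$ where $k$ is meant, and you have correctly used $k$ throughout.
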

\begin{proof}
We consider the set $U$ and show that it can contain at most one element from each level. Suppose the converse, let the vertices $v,u \in U$ and belong to level $g$. Let vertex $v$ have coordinates $(x_1,\ldots,x_g)$ and vertex $u$ have coordinates $(y_1,\ldots,y_g)$ and $v<u$. The vertex $u$ is adjacent to the "bad" vertex $b$, and $b<u$, so it can be $(y_1,\ldots,y_g-1)$ or $(y_1,\ldots,y_g,d)$. The second case is impossible because the "bad" vertex $(y_1,\ldots,y_g,d)$ is greater than $v$, and this contradicts the invariant. Consider the first case when $b$ has coordinates $(y_1,\ldots,y_g-1)$. $v<u$, so $v \leq b$, but they cannot be equal, since at the given iteration, $v$ is only potentially "bad" (i.e. "good") so far, so we are led to the same contradiction when the "bad" vertex is greater than the "good" one.
\end{proof}

Note that either $U$ contains no vertices on the last level, in which case $|U|\le d-1$ (as can be proved similarly to Lemma~\ref{change_depth:const}), or there is a vertex on the last level, but it need not be swapped. Therefore, we can obtain an upper bound on the number of edges changed at each iteration.
\begin{equation}\label{row_change_upper:const}
 \Delta_i \le 12(d - 1).
\end{equation}

Let $T$ be the number of iterations, needed for one of the vertex in $V_2$ to become bad (it equals to the number of iterations of Algorithm~\ref{alg_chang_schm:const}), we wil refer to this value as "information flow". According to Lemma~\ref{change_depth:const} at each While iteration there are not more than $d
$ new bad vertices, therefore we get
\begin{equation}\label{information_flow_T:const}
 T\ge \[\frac{|\cW|}{d}\]+1\ge\frac{|\cW|}{d}.
\end{equation}

Using $|\cV_1|=|\cV_2|\le \frac{n}{3}$ to estimate the size of the neutral vertex set $cW$, we get 
\begin{equation*}
 |\cW|=n-|\cV_1|-|\cV_2| \ge \frac{n}{3}.
\end{equation*}

Therefore using \ref{chi_upper:const} we get lower bound on the information flow:
\begin{equation}\label{flow_lower:const}
 T\ge \frac{n}{3d} \ge \frac{\chi^{\frac{d}{d+1}}}{3d(6d(2d-1))^{\frac{d}{d+1}}}.
\end{equation}

To proceed further, we apply a similar approach to determine $l_m$ (defined in the the proof of the Theorem~\ref{theorem:poly}).

To reach the $m$-th non-zero element, we need to make at least $m$ local steps and $(m - 1)T$ communication steps to transfer information from gradients between $\cV_1$ and $\cV_2$ sets. Therefore, we can estimate $l_m$:
\begin{equation}\label{non_zero_inf_const}
 l_m \ge (m-1)T+m.
\end{equation}

The solution of the global optimization problem is $x^*_p = \(\frac{\sqrt{\kappa_g} - 1}{\sqrt{\kappa_g} +1}\)^p$.

For any $m, p$ such that $l_m > p$
\begin{equation*}
 \norm{x_p-x_*}^2 \ge (x_*)_m^2+(x_*)_{m+1}^2+\ldots=\(\frac{\sqrt{\kappa_g} - 1}{\sqrt{\kappa_g} +1}\)^m\norm{x_0-x_*}^2.
\end{equation*}

Using \ref{non_zero_inf_const} we can take $m=\ceil{\frac{p}{T+1}}+1$. From \ref{kappa_rels_const} we conclude that $\frac{\sqrt{\kappa_g} - 1}{\sqrt{\kappa_g} +1} \ge 1 - \frac{2\sqrt{6}}{\sqrt{\kappa_l}}$

Let $C(d)=3d(6d(2d-1))^{\frac{d}{d+1}}$. Therefore using \ref{kappa_rels_const}, \ref{flow_lower:const} we get
\begin{equation*}
 \norm{x_p-x_*}^2 \ge \(\max\left\{0, 1 - 2\sqrt{6}\sqrt{\frac{\mu}{L}}\right\}\)^{\ceil{p/\(C(d)^{-1}\chi^{\frac{d}{d+1}}+1\)}+1}\norm{x_0-x_*}^2.
\end{equation*}

Rearranging it, we get
\begin{equation}\label{arg_lower_log}
 \norm{x_p-x_*}^2 \ge \(\max\left\{0, 1 - 2\sqrt{6}\sqrt{\frac{\mu}{L}}\right\}\)^{C(d)p\chi^{-\frac{d}{d+1}}+2}\norm{x_0-x_*}^2.
\end{equation}
%\end{proof}

\section{Accelerated Method over Time-Varying Function}\label{sec:acc_method_over_tw_function}

In this section, we show the convergence of accelerated Nesterov method over a uniformly non-increasing time-varying function. The proof is based on potential analysis in \cite{bansal2019potential}, and a similar proof technique was used in \cite{rogozin2019optimal}.

Consider a sequence of functions $\{f_k(x)\}_{k=0}^\infty$ such that the following assumptions hold.
\begin{assumption}\label{assum:strongly_convex_smooth_function_sequence}
	For every $k = 0, 1, 2, \ldots$, function $f_k(x)$ is $L$-smooth and $\mu$-strongly convex, that is, for any $x, y\in\R^d$ we have
	\begin{align*}
	\frac{\mu}{2}\norm{y - x}_2^2\leq f(y) - f(x) - \angles{\nabla f(x), y - x}\leq \frac{L}{2}\norm{y - x}_2^2.
	\end{align*}
\end{assumption}
\begin{assumption}\label{assum:nonincreasing_function_sequence}
	Sequence $\{f_k(x)\}_{k=0}^\infty$ is uniformly non-increasing: for each $x\in\R^d$ and for any $k = 0, 1, 2, \ldots$ we have
	\begin{align*}
	f_{k+1}(x)\leq f_k(x).
	\end{align*}
\end{assumption}
\begin{assumption}\label{assum:function_sequence_common_minimizer}
	Functions of sequence $\{f_k(x)\}_{k=0}^\infty$ have a common minimizer $x^*$.
\end{assumption}

Let an accelerated method be run over $\{f_k(x)\}_{k=0}^\infty$.
\begin{subequations}\label{eq:nesterov_method_time_varying_function_sequence}
	\begin{align}
	y_{k+1} &= x_k - \frac{1}{L}\nabla f_k(x_k), \\
	x_{k+1} &= \left( 1 + \frac{\sqrt{\kappa} - 1}{\sqrt{\kappa} + 1}\right)  y_{k+1} - \frac{\sqrt{\kappa} - 1}{\sqrt{\kappa} + 1} y_k,
	\end{align}
\end{subequations}
where $\kappa = L / \mu$. Then we have the following convergence result.

\begin{theorem}\label{th:nesterov_method_time_varying_function_sequence}
	Let accelerated Nesterov method be run over sequence $\{f_k\}_{k=0}^\infty$ and let Assumptions~\ref{assum:strongly_convex_smooth_function_sequence} and \ref{assum:nonincreasing_function_sequence} hold. We have
	\begin{align*}
	f_N(y_N) - f^* &\leq \frac{(L + \mu)R^2}{2} (1 - 1/\sqrt{\kappa})^N, \\
	\norm{y_N - y^*}_2^2 &\leq \frac{(L+\mu)R^2}{\mu} (1 - 1/\sqrt{\kappa})^N.
	\end{align*}
\end{theorem}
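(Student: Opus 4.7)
The plan is to adapt the standard Lyapunov/potential-function analysis of Nesterov's method (as in \cite{bansal2019potential}) to the setting where the objective varies with $k$ but only decreases pointwise. Set $\tau = 1/\sqrt{\kappa}$ and introduce the auxiliary sequence
\begin{equation*}
    z_k \;=\; \tfrac{1}{\tau}\,x_k - \tfrac{1-\tau}{\tau}\,y_k,
\end{equation*}
so that $x_k = \tau z_k + (1-\tau) y_k$ and the iteration \eqref{eq:nesterov_method_time_varying_function_sequence} is recast in the classical three-sequence form. I would then work with the Lyapunov function
\begin{equation*}
    \Phi_k \;=\; f_k(y_k) - f^* + \tfrac{\mu}{2}\norm{z_k - x^*}_2^2,
\end{equation*}
where $x^*$ is the common minimizer (Assumption~\ref{assum:function_sequence_common_minimizer}, implicit in the statement) and $f^*:=f_k(x^*)$ is the common optimal value (equal to $0$ in the consensus application that motivates the result).

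The heart of the argument is the one-step contraction $\Phi_{k+1} \le (1-\tau)\,\Phi_k$. I would first prove the slightly weaker statement in which $f_{k+1}$ is replaced by $f_k$ on the left-hand side. This is the standard accelerated analysis applied to a single function $f_k$: combine the descent inequality $f_k(y_{k+1}) \le f_k(x_k) - \tfrac{1}{2L}\norm{\nabla f_k(x_k)}_2^2$ coming from $L$-smoothness, the $\mu$-strong-convexity inequality of $f_k$ at $x_k$ evaluated at $x^*$, and the algebraic identity relating $z_{k+1}-x^*$ to $z_k - x^*$ and $\nabla f_k(x_k)$ that follows from the momentum rule; this yields
\begin{equation*}
    f_k(y_{k+1}) - f^* + \tfrac{\mu}{2}\norm{z_{k+1} - x^*}_2^2 \;\le\; (1-\tau)\,\Phi_k.
\end{equation*}
Next, I would invoke Assumption~\ref{assum:nonincreasing_function_sequence} at the single point $y_{k+1}$: since $f_{k+1}(y_{k+1}) \le f_k(y_{k+1})$ while $f_{k+1}(x^*)=f_k(x^*)=f^*$, we have $f_{k+1}(y_{k+1}) - f^* \le f_k(y_{k+1}) - f^*$, and therefore $\Phi_{k+1} \le (1-\tau)\,\Phi_k$.

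Unrolling gives $\Phi_N \le (1-\tau)^N \Phi_0$. For the initial potential, the initialization $y_0=x_0$ forces $z_0 = x_0$, and $L$-smoothness of $f_0$ yields $f_0(x_0) - f^* \le \tfrac{L}{2}R^2$, so $\Phi_0 \le \tfrac{L+\mu}{2}R^2$. The first claim now follows from $f_N(y_N) - f^* \le \Phi_N$, and the second from $\mu$-strong convexity of $f_N$, which gives $\tfrac{\mu}{2}\norm{y_N - x^*}_2^2 \le f_N(y_N) - f^* \le \Phi_N$. The main technical obstacle is the static one-step inequality, which requires the careful balancing of smoothness, strong-convexity, and momentum identities in exactly the same way as in the time-invariant proof; the time-varying aspect itself collapses to a single line invoking Assumption~\ref{assum:nonincreasing_function_sequence}, which is the only new ingredient beyond \cite{bansal2019potential}.
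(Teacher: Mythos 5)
Your proof takes essentially the same route as the paper: the Lyapunov function $f_k(y_k)-f^*+\frac{\mu}{2}\norm{z_k-x^*}_2^2$ with a geometric weight, a one-step contraction obtained from the standard static accelerated analysis of $f_k$, and a single invocation of Assumption~\ref{assum:nonincreasing_function_sequence} at the point $y_{k+1}$ (together with the common minimizer of Assumption~\ref{assum:function_sequence_common_minimizer}) to pass from $f_k$ to $f_{k+1}$; this is exactly the paper's Lemmas~\ref{lem:auxiliary_x_z_nabla} and~\ref{lem:potential_change}. One parameter correction: for the momentum coefficient $(\sqrt\kappa-1)/(\sqrt\kappa+1)$ the consistent choice is $\tau=1/(\sqrt\kappa+1)$ rather than $1/\sqrt\kappa$ (with your $\tau$ the $z$-recursion does not take the form needed for the $\langle\nabla f_k(x_k),x_k-x^*\rangle$ and $\langle\nabla f_k(x_k),z_k-x^*\rangle$ terms to cancel), and the per-step factor $1-1/\sqrt\kappa$ then arises as $(1+\gamma)^{-1}$ with $\gamma=1/(\sqrt\kappa-1)$, not as $1-\tau$.
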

Before passing to proof of Theorem~\ref{th:nesterov_method_time_varying_function_sequence}, we need the following auxiliary lemma.
\begin{lemma}\label{lem:auxiliary_x_z_nabla}
	Consider updates in \eqref{eq:nesterov_method_time_varying_function_sequence} and define 
	\begin{align*}
	\tau &= \frac{1}{\sqrt{\kappa}+1}, \text{ and }
	z_{k+1} = \frac{1}{\tau} x_{k+1} - \frac{1-\tau}{\tau} y_{k+1}.
	\end{align*}
	Then, $z_{k+1} = \frac{1}{1+\gamma}z_k + \frac{\gamma}{1+\gamma}x_k - \frac{\gamma}{\mu(1+\gamma)}\nabla f_k(x_k)$,
	where $\gamma = \frac{1}{\sqrt\kappa - 1}$.
\end{lemma}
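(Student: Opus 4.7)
The proof is a direct algebraic verification; no inequalities or potential function reasoning are needed, just careful bookkeeping of the constants $\tau$, $\beta$, $\gamma$, $\kappa$.

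My plan is to first rewrite the recursion for $z_{k+1}$ in terms of $y_{k+1}$ and $y_k$ alone, then substitute the definition of $y_{k+1}$, and finally match this against the right-hand side after expanding $z_k$. Concretely, I would substitute $x_{k+1} = (1+\beta) y_{k+1} - \beta y_k$ (with $\beta = (\sqrt\kappa - 1)/(\sqrt\kappa+1)$) into the definition $z_{k+1} = \tfrac{1}{\tau} x_{k+1} - \tfrac{1-\tau}{\tau} y_{k+1}$. Collecting the coefficient of $y_{k+1}$ yields $(\beta + \tau)/\tau$ and of $y_k$ yields $-\beta/\tau$. Using the elementary identities $\beta + \tau = \sqrt\kappa/(\sqrt\kappa+1)$ and $\tau = 1/(\sqrt\kappa+1)$, these coefficients simplify to $\sqrt\kappa$ and $-(\sqrt\kappa - 1)$ respectively, giving the clean intermediate form
\begin{equation*}
z_{k+1} = \sqrt\kappa\, y_{k+1} - (\sqrt\kappa - 1)\, y_k.
\end{equation*}

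Next I would substitute $y_{k+1} = x_k - \tfrac{1}{L}\nabla f_k(x_k)$ to obtain
\begin{equation*}
z_{k+1} = \sqrt\kappa\, x_k - (\sqrt\kappa-1)\, y_k - \tfrac{\sqrt\kappa}{L}\nabla f_k(x_k).
\end{equation*}
For the target form, I would expand $z_k = (\sqrt\kappa+1)x_k - \sqrt\kappa\, y_k$ directly from its definition, compute the coefficients $1/(1+\gamma) = (\sqrt\kappa-1)/\sqrt\kappa$ and $\gamma/(1+\gamma) = 1/\sqrt\kappa$ induced by $\gamma = 1/(\sqrt\kappa-1)$, and verify that
\begin{equation*}
\tfrac{1}{1+\gamma} z_k + \tfrac{\gamma}{1+\gamma} x_k = \sqrt\kappa\, x_k - (\sqrt\kappa - 1)\, y_k,
\end{equation*}
which matches the $x_k, y_k$ part of the expression above.

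Finally I would check that the gradient coefficients agree. The intermediate form has $-\tfrac{\sqrt\kappa}{L}\nabla f_k(x_k)$, and the target is $-\tfrac{\gamma}{\mu(1+\gamma)}\nabla f_k(x_k) = -\tfrac{1}{\mu\sqrt\kappa}\nabla f_k(x_k)$. These are equal because $\kappa = L/\mu$, so $\tfrac{\sqrt\kappa}{L} = \tfrac{1}{\mu\sqrt\kappa}$. This closes the identity.

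The only remotely delicate step is recognizing the right identities to simplify $(\beta+\tau)/\tau$ and $\beta/\tau$ cleanly; everything else is substitution. There is no obstacle beyond bookkeeping, and the proof fits on a few lines once the intermediate form $z_{k+1} = \sqrt\kappa\, y_{k+1} - (\sqrt\kappa-1) y_k$ is isolated.
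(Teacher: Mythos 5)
Your proof is correct and is essentially the same direct algebraic verification as the paper's: both derive the intermediate form $z_{k+1}=\frac{1}{\tau}\left((1-\tau)y_{k+1}-(1-2\tau)y_k\right)=\sqrt{\kappa}\,y_{k+1}-(\sqrt{\kappa}-1)y_k$ and then match it to the claimed recursion using $\kappa=L/\mu$. The only cosmetic difference is that the paper eliminates $y_k$ via $x_k=(1-\tau)y_k+\tau z_k$ on the left-hand side, whereas you expand $z_k$ on the right-hand side and compare coefficients; the computation is the same.
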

\begin{proof}
	By the update rule for $x_{k+1}$ given in \eqref{eq:nesterov_method_time_varying_function_sequence} and the definition of $\tau$, we have that
	\begin{align*}
	x_{k+1} 
	&= \left( 1 + \frac{\sqrt\kappa - 1}{\sqrt\kappa + 1}\right)  y_{k+1} - \frac{\sqrt\kappa - 1}{\sqrt\kappa + 1} y_k \\ 
	&= (2 - 2\tau)y_{k+1} - (1 - 2\tau)y_k.
	\end{align*}
	
	Moreover, by the definition of $z_{k+1}$, it follows that
	\begin{align*}
	z_{k+1}  
	& =\frac{1}{\tau}x_{k+1} - \frac{1-\tau}{\tau}y_{k+1} \\
	& =\frac{1}{\tau}\left( (2-2\tau)y_{k+1} - (1-2\tau)y_k\right)  - \frac{1-\tau}{\tau}y_{k+1} \\
	& =\frac{1}{\tau} \left( (1-\tau)y_{k+1} - (1-2\tau)y_k\right) .
	\end{align*}
	
	Now we use the update rule for $y_{k+1}$ given in \eqref{eq:nesterov_method_time_varying_function_sequence} and also note that $x_k = (1-\tau)y_k + \tau z_k$:
	
	\begin{align*}
	z_{k+1}
	& =\frac{1}{\tau} \Big[ (1-\tau)(x_k - \frac{1}{L}\nabla f_k(x_k)) - \frac{1-2\tau}{1-\tau}(x_k - \tau z_k) \Big] \\
	& =\frac{1-2\tau}{1-\tau}z_k + \frac{\tau}{1-\tau}x_k - \frac{1-\tau}{L\tau}\nabla f_k(x_k)  \\
	& \overset{\circledOne}{=}\frac{\sqrt\kappa - 1}{\sqrt\kappa}z_k + \frac{1}{\sqrt\kappa}x_k - \frac{1}{\mu\sqrt\kappa} \\
	& \overset{\circledTwo}{=}\frac{1}{1+\gamma}z_k + \frac{\gamma}{1+\gamma}x_k - \frac{\gamma}{\mu(1+\gamma)}\nabla f_k(x_k),
	\end{align*}
	where $\circledOne$ is obtained by using the definitions of $\tau$ and $\kappa$, and $\circledTwo$ is obtained by using the definition of $\gamma$.
\end{proof}

\begin{lemma}\label{lem:potential_change}
	Let $\{f_k(x)\}_{k=0}^\infty$ be a sequence of functions for which Assumptions~\ref{assum:strongly_convex_smooth_function_sequence} and \ref{assum:nonincreasing_function_sequence} hold. Introduce potential function
	\begin{align}\label{eq:nesterov_potential}
	\Psi_k = (1 + \gamma)^k\cdot \left( f_k(y_k) - f^* + \frac{\mu}{2}\|z_k - x^*\|_2^2 \right),
	\end{align}
	Then, it holds that
	\begin{equation}\label{bound:delta_phi}
	\Delta\Psi_k = \Psi_{k+1} - \Psi_k\le 0.
	\end{equation}
	%where $\delta_k(x)$ is given by \eqref{eq:def_delta} and $\Delta\Psi_k$ is defined in \eqref{eq:def_delta_phi}.
\end{lemma}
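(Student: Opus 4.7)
The plan is to reduce the time-varying statement to the classical one-step potential decrease of static Nesterov's method applied to the fixed function $f_k$, invoking the uniform non-increasing property exactly once per iteration. Concretely, I first split
\begin{align*}
\Psi_{k+1} - \Psi_k &= (1+\gamma)^{k+1}\!\left( f_{k+1}(y_{k+1}) - f^* + \tfrac{\mu}{2}\|z_{k+1} - x^*\|_2^2 \right) \\
&\quad - (1+\gamma)^k\!\left( f_k(y_k) - f^* + \tfrac{\mu}{2}\|z_k - x^*\|_2^2 \right),
\end{align*}
and use Assumption~\ref{assum:nonincreasing_function_sequence} to replace $f_{k+1}(y_{k+1})$ by the larger quantity $f_k(y_{k+1})$. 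By Assumption~\ref{assum:function_sequence_common_minimizer} we may simultaneously identify $f^* = f_k(x^*)$, so after this single substitution only the static function $f_k$ appears, and it suffices to verify the classical one-step Lyapunov inequality for Nesterov's method on $f_k$.

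For that classical inequality, I would combine two standard ingredients attached to $f_k$. The descent lemma coming from $L$-smoothness applied to $y_{k+1} = x_k - \tfrac{1}{L}\nabla f_k(x_k)$ gives $f_k(y_{k+1}) \leq f_k(x_k) - \tfrac{1}{2L}\|\nabla f_k(x_k)\|_2^2$, and $\mu$-strong convexity evaluated at $x_k$ with test points $y_k$ and $x^*$, combined with weights $(1-\tau)$ and $\tau$, gives an upper bound on $f_k(x_k) - f^*$ whose linear term simplifies to $\tau \langle \nabla f_k(x_k), z_k - x^*\rangle$ once one uses $x_k = (1-\tau)y_k + \tau z_k$, and which carries a strong-convexity correction $\tfrac{\mu\tau}{2}\|z_k - x^*\|_2^2$. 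On the distance side, I would expand $\|z_{k+1} - x^*\|_2^2$ directly using Lemma~\ref{lem:auxiliary_x_z_nabla}, squaring the decomposition
\begin{align*}
z_{k+1} - x^* = \tfrac{1}{1+\gamma}(z_k - x^*) + \tfrac{\gamma}{1+\gamma}(x_k - x^*) - \tfrac{\gamma}{\mu(1+\gamma)}\nabla f_k(x_k).
\end{align*}
Substituting both expansions into the right-hand side of the reduced inequality, dividing by $(1+\gamma)^k$, and using the parameter identities $\tau(1+\gamma) = 1$, $\gamma/(1+\gamma) = 1/\sqrt{\kappa}$, should make the $\langle \nabla f_k(x_k), z_k - x^*\rangle$ cross-terms cancel, combine the $\|x_k - x^*\|_2^2$ contribution nonpositively with the strong-convexity correction, and reduce the residual $\|\nabla f_k(x_k)\|_2^2$ coefficient to something controlled by the $-\tfrac{1}{2L}\|\nabla f_k(x_k)\|_2^2$ from the descent lemma, yielding $\Psi_{k+1} - \Psi_k \leq 0$.

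The main obstacle I expect is purely the bookkeeping: making sure that after squaring the representation from Lemma~\ref{lem:auxiliary_x_z_nabla} and multiplying through by $(1+\gamma)$, the prefactors $\tau$, $1-\tau$, $1/(1+\gamma)$, $\gamma/(1+\gamma)$, and $1/\mu$ versus $1/L$ fit together so that the inner-product terms cancel exactly and the remaining quadratic form in $\nabla f_k(x_k)$ is nonpositive. This is essentially the algebra of the static Nesterov potential analysis of \cite{bansal2019potential}, transported verbatim to the three-sequence $(x_k, y_k, z_k)$ form used here; the only new conceptual content is the opening reduction, where the non-increasing Assumption~\ref{assum:nonincreasing_function_sequence} is applied once to pass from $f_{k+1}(y_{k+1})$ to $f_k(y_{k+1})$ and thereby eliminate all time dependence before the classical argument is run.
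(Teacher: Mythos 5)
Your proposal is correct and follows essentially the same route as the paper: apply Assumption~\ref{assum:nonincreasing_function_sequence} once to replace $f_{k+1}(y_{k+1})$ by $f_k(y_{k+1})$, then run the static Bansal--Gupta potential argument on $f_k$ (descent lemma, strong convexity, expansion of $\|z_{k+1}-x^*\|_2^2$ via Lemma~\ref{lem:auxiliary_x_z_nabla}, cancellation of the gradient cross-terms). The only cosmetic difference is your $(1-\tau,\tau)$ normalization of the convexity combination versus the paper's $(1,\gamma)$ weighting, which are equivalent; your explicit appeal to Assumption~\ref{assum:function_sequence_common_minimizer} to identify $f^*=f_k(x^*)=f_{k+1}(x^*)$ is in fact slightly more careful than the paper, which uses this implicitly.
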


\begin{proof}
	The proof is analogous to the proof in Section $5.4$ in \cite{bansal2017potential}. We use the definitions of $\tau, z_k$ given in Lemma \ref{lem:auxiliary_x_z_nabla}. We have
	
	\begin{align*}
	&\Delta\Psi_k\cdot(1+\gamma)^{-k} = (1+\gamma) \big( f(y_{k+1}) - f^* + \frac{\mu}{2}\|z_{k+1} - x^*\|_2^2 \big) \\
	&\qquad\qquad \qquad\qquad- \big( f(y_k) - f^* + \frac{\mu}{2} \|z_k - x^*\|_2^2 \big) \nonumber\\
	&\quad= (1+\gamma)\big( f_{k+1}(y_{k+1}) - f_{k+1}(x^*) \big) - \big(f_k(y_k) - f_k(x^*)\big)\\
	& \quad\quad+ \frac{\mu}{2} \Big[ (1+\gamma)\|z_{k+1} - x^*\|_2^2 - \|z_k - x^*\|_2^2 \Big]. \numberthis \label{eq:potential_change}
	\end{align*}
	
	Note that from Assumption~\ref{assum:nonincreasing_function_sequence} and from basic gradient step inequality we have
	\begin{align*}
	f_k(y_k)\leq f_k(y_{k+1}) & \le f_k(x_k) - \frac{1}{2L}\|\nabla f_k(x_k)\|_2^2,
	\end{align*}
	
	We bound th first term in \eqref{eq:potential_change} as follows:
	\begin{align*}
	& (1+\gamma)\big( f_{k+1}(y_{k+1}) - f_{k+1}(x^*)\big) - \big(f_k(y_k) - f_k(x^*) \big) \\
	&\quad\le (1+\gamma)\big( f_k(x_k) - \frac{1}{2L}\|\nabla f_k(x_k)\|_2^2 - f^* \big) - \big( f_k(y_k) - f^* \big) \\
	&\quad= f_k(x_k) - f_k(y_k) + \gamma(f_k(x_k) - f^*) - (1+\gamma)\frac{\|\nabla f_k(x_k)\|_2^2}{2L} \\  
	&\quad\le \langle\nabla f_k(x_k), x_k - y_k\rangle + \gamma\big(\langle\nabla f_k(x_k), x_k - x^*\rangle - \frac{\mu}{2}\|x_k - x^*\|_2^2\big)\\
	&\qquad - \frac{1+\gamma}{2L}\|\nabla f_k(x_k)\|_2^2 \numberthis\label{eq:potential_change_first_kerm}.
	\end{align*}
	
	Let us employ Lemma~\ref{lem:auxiliary_x_z_nabla} to get rid of references to $y_k$. We have
	\begin{align*}
	z_k &= 
	\big(\frac{1}{\tau} - 1\big)(x_k - y_k) + x_k = 
	\sqrt\kappa(x_k - y_k) + x_k \\
	\gamma(z_k - x^*) &= \sqrt\kappa\gamma(x_k - y_k) + \gamma(x_k - x^*).
	\end{align*}
	
	Note that $\sqrt\kappa\gamma = 1 + \gamma$. We have
	\begin{align*}
	(x_k - y_k) + \gamma(x_k - x^*) = \frac{1}{1 + \gamma}\cdot \Big[\gamma(z_k - x^*) + \gamma^2 (x_k - x^*)\Big].
	\end{align*}
	
	% Let $X_k = x_k - x^*, Z_k = z_k - x^*$, and 
	After that, we rewrite the expression on the right hand side of $\eqref{eq:potential_change_first_kerm}$ as follows:
	\begin{align*}
	\label{eq:potential_change_first_kerm_2}
	&\frac{1}{1+\gamma}\langle\nabla f_k(x_k), \gamma (z_k - x^*) + \gamma^2(x_k - x^*)\rangle - \\
	&\quad \frac{\mu\gamma}{2}\|x_k - x^*\|_2^2 
	- \frac{1 + \gamma}{2L}\|\nabla f_k(x_k)\|_2^2. \numberthis
	\end{align*}
	
	%	The obtained bound \eqref{eq:potential_change_first_kerm_2} is almost the same as (5.62) in \cite{bansal2017potential}. The only difference is the additional term \mbox{$(1+\gamma)\delta_k(y_{k+1})$}.
	
	We bound the second term in \eqref{eq:potential_change} similarly to \cite{bansal2019potential}. By Lemma \ref{lem:auxiliary_x_z_nabla}:
	
	\begin{align*}
	&\frac{\mu}{2} \Big[ (1+\gamma)\|z_{k+1} - x^*\|_2^2 - \|z_k - x^*\|_2^2 \Big] \\
	&\quad = \frac{\mu}{2}(1+\gamma)\Big\|\frac{1}{1+\gamma}(z_k - x^*)+ \frac{\gamma}{1+\gamma}(x_k - x^*) - \frac{\gamma}{\mu(1+\gamma)}\nabla f_k(x_k)\Big\|_2^2 - \frac{\mu}{2}\|z_k - x^*\|_2^2  \\
	&\quad = \frac{\mu}{2}\frac{1}{1+\gamma}\Big[ \|z_k - x^*\|_2^2 + \gamma^2\|x_k - x^*\|_2^2  + \frac{\gamma^2}{\mu^2}\|\nabla f_k(x_k)\|_2^2 \\
	&\qquad + 2\gamma\langle z_k - x^*, x_k - x^*\rangle - \frac{2\gamma}{\mu}\langle z_k - x^*, \nabla f_k(x_k)\rangle \\
	&\qquad - \frac{2\gamma^2}{\mu}\langle x_k - x^*, \nabla f_k(x_k)\rangle \Big] - \frac{\mu}{2}\|z_k - x^*\|_2^2 \numberthis \label{eq:potential_change_second_kerm}.
	\end{align*}
	
	Adding \eqref{eq:potential_change_first_kerm_2} yields a bound on $\Delta\Psi_k$. Moreover, note that terms involving $\langle\nabla f_k(x_k), x_k - x^*\rangle$ and $\langle\nabla f_k(x_k), z_k - x^*\rangle$ cancel out.
	
	\begin{align*}
	&\Delta\Psi_k (1+\gamma)^{-k}\\
	&\quad\le \left(-\frac{1+\gamma}{2L} + \frac{\gamma^2}{2\mu(1+\gamma)}\right) \|\nabla f_k(x_k)\|_2^2 \\
	&\qquad + \frac{\mu\gamma}{2}\left(\frac{\gamma}{1+\gamma} - 1\right)\|x_k - x^*\|_2^2 + \frac{\mu}{2}\left(\frac{1}{1+\gamma} - 1\right)\|z_k - x^*\|_2^2 \\
	&\qquad + \frac{\mu\gamma}{1+\gamma}\langle z_k - x^*, x_k - x^*\rangle \\
	&\quad \le -\frac{\mu\gamma}{2(1+\gamma)}\big(\|x_k - x^*\|_2^2 + \|z_k - x^*\|_2^2 - 2\langle z_k - x^*, x_k - x^*\rangle\big) \\
	&\quad = -\frac{\mu\gamma}{2(1+\gamma)} \|(x_k - x^*) - (z_k - x^*)\|_2^2 \le 0,
	\end{align*}
	and the proof is complete.
\end{proof}

Now we can prove Theorem~\ref{th:nesterov_method_time_varying_function_sequence} using the potentials technique.
\begin{proof}[Proof of Theorem~\ref{th:nesterov_method_time_varying_function_sequence}]
	Following the definition of $\Psi_k$ and using the Lemma~\ref{lem:potential_change}, we obtain
	\begin{align*}
	(1 + \gamma)^N (f_N(y_N) - f^*)&\leq \Psi_N\leq \Psi_0\leq \frac{(L + \mu)R^2}{2}, \\
	f_N(y_N) - f^*&\leq \frac{(L + \mu)R^2}{2 (1 + \gamma)^N} = \frac{(L + \mu)R^2}{2}(1 - 1/\sqrt\kappa)^N.
	\end{align*}
\end{proof}

\section{Missing Proofs from Section~\ref{sec:accelerated_gossip_tw}}

\subsection{Proof of Theorem~\ref{th:chebyshev_acceleration_tw}}

Statement 1 follows directly from Algorithm~\ref{alg:accelerated_gossip_tw}. To see why statement 2 holds it is sufficient to note that $\range\mW^k = \cL^\top$.

For statement 3, denote $\ol\bx = \frac{1}{m}\one\one^\top\otimes\mI$ let us apply Theorem~\ref{th:nesterov_method_time_varying_function_sequence} to see that
\begin{align*}
\norm{\bx^T - \ol\bx}_2^2\leq 2\chi\norm{\bx^0 - \ol\bx}_2^2 (1 - 1/\sqrt\chi)^T.
\end{align*}
Taking $T = \sqrt\chi\log(4\chi)$ we obtain $\norm{\bx^T - \ol\bx}_2^2\leq 1/2\norm{\bx^0 - \ol\bx}_2^2$. If $\bx\in\cL^\top$, then $\ol\bx = 0$ and following the definition $C_T(\bx) = \bx - \bx^T$, we write
\begin{align*}
\norm{C_T(\bx)}_2 &= \norm{\bx - \bx^T}_2\leq \norm{\bx - \ol\bx}_2 + \norm{\bx^T - \ol\bx}_2 \\
&\leq (1 + 1/\sqrt{2})\norm{\bx - \ol\bx}_2, \\
\norm{C_T(\bx)}_2 &= \norm{\bx - \bx^T}_2\geq \norm{\bx - \ol\bx}_2 - \norm{\bx^T - \ol\bx}_2 \\
&\geq (1 - 1/\sqrt{2})\norm{\bx - \ol\bx}_2
\end{align*}

\subsection{Proof of Lemma~\ref{lemma:potential_decrease}}

The proof directly follows from Lemma~\ref{lem:potential_change} applied to problem~\eqref{eq:consensus_as_minimization}.

\end{document}